\newtheorem{thm}{Theorem}[section]
\newtheorem{cor}[thm]{Corollary}
\newtheorem{lem}{Lemma}[section]
\newtheorem{prop}{Proposition}[section]
\theoremstyle{definition}
\newtheorem{defn}{Definition}[section]
\theoremstyle{remark}
\newtheorem{rem}{Remark}[section]
\numberwithin{equation}{section}
\numberwithin{equation}{section}
\newcounter{saveeqn}
\title[Determination of a fractional Helmholtz equation]{Determining a fractional Helmholtz system with unknown source and medium parameter}
\author{Xinlin Cao}
\address{Department of Mathematics, Hong Kong Baptist University, Kowloon, Hong Kong SAR, China}
\email{xlcao.math@foxmail.com}
\author{Hongyu Liu}
\address{Department of Mathematics, Hong Kong Baptist University, Kowloon, Hong Kong SAR, China}
\email{hongyu.liuip@gmail.com, hongyuliu@hkbu.edu.hk}
\date{} 
\begin{document}
\maketitle

\begin{abstract}

We are concerned with an inverse problem associated with the fractional Helmholtz system that arises from the study of viscoacoustics in geophysics and thermoviscous modelling of lossy media. We are particularly interested in the case that both the medium parameter and the internal source of the wave equation are unknown. Moreover, we consider a general class of source functions which can be frequency-dependent. We establish several general uniqueness results in simultaneously recovering both the medium parameter and the internal source by the corresponding exterior measurements. In sharp contrast, these unique determination results are unknown in the local case, which would be of significant importance in thermo- and photo-acoustic tomography.

\medskip

\noindent{\bf Keywords:}~~Fractional Helmholtz equation, simultaneous recovery, low-frequency asymptotics, compact embedding theorem, strong uniqueness property, Runge approximation property

\noindent{\bf 2010 Mathematics Subject Classification:}~~35R30, 26A33, 35J05, 35P25

\end{abstract}

\tableofcontents

\section{Introduction}

\subsection{Mathematical setup}

Let $\Omega$ be a bounded Lipschitz domain in $\mathbb{R}^n$, $n\geq 2$. Let $q(x)\in L^\infty(\Omega)$ and $\sigma(x) \in C^\infty(\mathbb{R}^n)$ be a symmetric-positive-definite matrix-valued function such that there exists $\lambda\in (0, 1)$,
\[
\lambda|\xi|^2\leq \xi^T\sigma(x)\xi\leq \lambda^{-1}|\xi|^2\quad \mbox{for all}\ \ x\in\Omega\ \ \mbox{and}\ \ \xi\in\mathbb{R}^n.  
\]
Let $\omega\in\mathbb{R}_+$ and $p(x,\omega)$, $x\in\Omega$, be an $L^2$ function for any fixed $\omega$. Let $s\in(0, 1)$ be a fractional order. Consider the following fractional-order Helmholtz system for $u(x, \omega)\in H^s(\mathbb{R}^n)$ associated with a fixed frequency $\omega\in\mathbb{R}_+$,
\begin{equation}\label{eq:fractional3}
\begin{cases}
&\displaystyle{(-\nabla\cdot(\sigma\nabla))^s{u}(x,w)+\omega^2q(x){u}(x,w)=p(x,\omega), \quad x\in\Omega},\medskip\\
&\displaystyle{{u}(x,w)=\psi(x), \quad x\in\Omega_e:=\mathbb{R}^n\backslash\overline{\Omega}}, 
\end{cases}
\end{equation}
where $\psi\in H^s(\mathbb{R}^n)$. 
It is assumed that $0$ is not an eigenvalue of the nonlocal operator $\left(-\nabla\cdot(\sigma\nabla)\right)^s+\omega^2q$, which means that if
\begin{equation}\label{condition}
\begin{cases}
&\displaystyle{ \left(-\nabla\cdot(\sigma\nabla)\right)^s {u}+\omega^2 q {u}=0\quad\mbox{in}\ \ \Omega},\medskip\\
& {u}=0\quad\mbox{in}\ \ \Omega_e. 
\end{cases}
\end{equation}
then we must have $u\equiv0 \in \mathbb{R}^n$. Under the aforesaid condition, the well-posedness of \eqref{eq:fractional3} shall be verified in what follows, and in particular, one has a well-defined Dirichlet-to-Neumann (DtN) map associated with \eqref{eq:fractional3} as follows,
\begin{equation}\label{eq:dtn2}
\Lambda^\omega_{q, p}(\psi)=\left(-\nabla\cdot(\sigma\nabla)\right)^s {u}(x,\omega)|_{x\in\Omega_e},
\end{equation}
where $u$ is the solution to \eqref{eq:fractional3}. 

In this paper, we are mainly concerned with the inverse problem of recovering both $p$ and $q$ by knowledge of the DtN operator $\Lambda^\omega_{q,p}$; that is
\begin{equation}\label{eq:ip1}
\Lambda^\omega_{q,p}\longrightarrow (\Omega; q, p)
\end{equation}
Throughout, we assume that the elliptic tensor $\sigma$ is known. Physically speaking, $p$ represents a source term and $q$ represents the medium parameter of an inhomogeneous medium supported in $\Omega$. $\Lambda_{q,p}^\omega$ encodes the exterior measurements. The major findings of this paper can be first roughly described as follows. The multiple-frequency data $(\psi, \Lambda^\omega_{p, q}(\psi))$ for any fixed $\psi\in H^s(\mathbb{R}^n)$ and $\omega\in (0, \omega_0)$ with any fixed $\omega_0\in\mathbb{R}_+$ uniquely determine a generic source term $p(x,\omega)$ if it satisfies certain ``regularity" condition with respect to $\omega$. 
Moreover, the aforementioned recovery result of the source term is independent of the medium parameter $q$. It is remarked that the result includes the special case with $\psi\equiv 0$. In such a case, the exterior measurement data are the Neumann data generated solely due to the internal source in $\Omega$, and our unique determination result indicates that one can recover the source term by using such data without knowing the surrounding medium $(\Omega, q)$. As for the recovery of the medium parameter $q$, we need to make use of the multiple measurement data $(\psi, \Lambda^\omega_{p, q}(\psi))$ with all $\psi\in H^s(\mathbb{R}^n)$ and $\omega\in (0, \omega_0)$. 

Next, we detail the major recovery results by stating the following two theorems. 

\begin{thm}\label{thm:1}

Let $\Omega\subset\mathbb{R}^n$ be a bounded Lipschitz domain and $\mathcal{O}_1$, $\mathcal{O}_2\subset\mathbb{R}^n\backslash\overline{\Omega}$ be two arbitrary nonempty open subsets. Let $\omega_0\in\mathbb{R}_+$ be fixed, and let $p_j, q_j, \sigma$, $j=1, 2$, be as described earlier. It is further assumed that $p_j(x,\omega)\in C^3[0,\omega_0]$ with respect to $\omega$ for any $x\in\Omega$.
If for any fixed $\psi\in C_c^{\infty}(\mathcal{O}_1)$, there holds
\begin{equation}\label{eq:a2}
\Lambda_{q_1, p_1}^\omega(\psi)|_{\mathcal{O}_2}=\Lambda_{q_2,p_2}^\omega(\psi)|_{\mathcal{O}_2}\quad\mbox{for}\ \ \omega\in (0, \omega_0),
\end{equation}
then one has
\begin{equation}\label{eq:a3}
\frac{\partial^j p_1}{\partial\omega^j}(x,\omega)\bigg|_{\omega=0}=\frac{\partial^j p_2}{\partial\omega^j}(x,\omega)\bigg|_{\omega=0},\ \ \ x\in\Omega,\ \ j=0, 1. 
\end{equation}

\end{thm}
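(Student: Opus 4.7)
The plan is to exploit the low-frequency behavior of the DtN data in $\omega$. Since $p_j(x,\omega)$ is $C^3$ in $\omega$ near $\omega=0$ and the medium-parameter term carries a factor $\omega^2$, Taylor-expanding the fractional Helmholtz system in $\omega$ around $0$ will, at orders $j=0$ and $j=1$, decouple the source $p_j$ from the medium parameter $q_j$. The content of the theorem is precisely this decoupling, and the engine to propagate equality of fractional DtN data to equality of $p_j$ itself will be the strong uniqueness (unique continuation) property for the variable-coefficient fractional operator $(-\nabla\cdot(\sigma\nabla))^s$ listed among the paper's keywords.

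First I would let $u_j(\cdot,\omega)$ denote the unique $H^s(\mathbb{R}^n)$ solution of \eqref{eq:fractional3} associated with $(q_j,p_j)$ and the same exterior data $\psi\in C_c^\infty(\mathcal{O}_1)$, and set $U:=u_1-u_2$. Subtracting the two equations yields
\[
(-\nabla\cdot(\sigma\nabla))^s U + \omega^2 q_1 U = (p_1-p_2) - \omega^2(q_2-q_1)u_2\quad\text{in}\ \Omega,
\]
with $U=0$ in $\Omega_e$, while the hypothesis \eqref{eq:a2} reads $(-\nabla\cdot(\sigma\nabla))^s U\equiv 0$ on $\mathcal{O}_2$ for every $\omega\in(0,\omega_0)$. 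Using the well-posedness of \eqref{eq:fractional3} together with the $C^3$-regularity of $p_j$ in $\omega$, I would show that $\omega\mapsto u_j(\cdot,\omega)$ is differentiable into $H^s(\mathbb{R}^n)$ at $\omega=0^+$, so that taking the limits $\omega\to 0^+$ and $\partial_\omega|_{\omega=0^+}$ of the above relations is legitimate.

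Passing to $\omega=0$ kills the $\omega^2$-coupling, leaving
\[
(-\nabla\cdot(\sigma\nabla))^s U_0 = p_1(\cdot,0)-p_2(\cdot,0)\ \text{in}\ \Omega,\qquad U_0=0\ \text{in}\ \Omega_e,
\]
together with $(-\nabla\cdot(\sigma\nabla))^s U_0 = 0$ on the nonempty open set $\mathcal{O}_2\subset\Omega_e$, where $U_0:=U|_{\omega=0}$. The strong uniqueness property for $(-\nabla\cdot(\sigma\nabla))^s$ then forces $U_0\equiv 0$ on $\mathbb{R}^n$, whence $p_1(\cdot,0)=p_2(\cdot,0)$ in $\Omega$, proving the $j=0$ case of \eqref{eq:a3}. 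Differentiating the difference equation once in $\omega$ and then setting $\omega=0$ produces an identical system for $V_1:=\partial_\omega U|_{\omega=0}$ with source $\partial_\omega p_1(\cdot,0)-\partial_\omega p_2(\cdot,0)$, since $\partial_\omega(\omega^2\cdot)|_{\omega=0}=0$ still suppresses all $q_j$-dependence; the same UCP argument gives $V_1\equiv 0$ and hence the $j=1$ case.

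The main obstacle will be the technical but essential ingredient of invoking the strong unique continuation principle in the form: if $v\in H^s(\mathbb{R}^n)$ vanishes on an open set $\mathcal{O}\subset\Omega_e$ together with $(-\nabla\cdot(\sigma\nabla))^s v$, then $v\equiv 0$. For the constant-coefficient $(-\Delta)^s$ this is the now-classical result used in the Calder\'on-type programme for the fractional Schr\"odinger equation, but for the variable-coefficient operator $(-\nabla\cdot(\sigma\nabla))^s$ it requires a smoothness/ellipticity input on $\sigma$ (provided here by $\sigma\in C^\infty(\mathbb{R}^n)$ and the uniform ellipticity bound) and must be applied to the time-frozen solutions $U_0$ and $V_1$. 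A secondary but important point is justifying the $\omega$-differentiability of $u_j$ at $0$ in the $H^s$-topology uniformly in $\omega\in(0,\omega_0)$, which is why the $C^3[0,\omega_0]$-regularity of $p_j$ is imposed. Finally, it is worth emphasising why the argument naturally stops at $j=1$: at order $j=2$ one picks up $\partial_\omega^2(\omega^2 q_j u_j)|_{\omega=0}=2q_j u_j(\cdot,0)$, which couples $q$ and $p$ inside the same equation and thus cannot be separated from DtN data alone, consistent with the fact that the companion medium-recovery result requires the full family of exterior data $\psi$.
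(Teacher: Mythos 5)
Your argument is sound in outline and reaches the right conclusion, but it reverses the order of operations relative to the paper, and this reversal is what creates your main technical burden. The paper applies the strong uniqueness principle (Proposition~\ref{strong unique}) to $W=u_1-u_2$ at each \emph{fixed} $\omega\in(0,\omega_0)$: since $W=0$ in $\Omega_e$ and $(-\nabla\cdot(\sigma\nabla))^sW=0$ on $\mathcal{O}_2$, one concludes $W(\cdot,\omega)\equiv 0$ for every $\omega$, so the difference equation collapses to the pointwise identity $p_2(x,\omega)-p_1(x,\omega)=\omega^2(q_2(x)-q_1(x))u_2(x,\omega)$, i.e.\ \eqref{eq:equiv}. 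The theorem then follows by Taylor-expanding $p_1-p_2$ at $\omega=0$ and observing that the right-hand side is $O(\omega^2)$ in $L^2(\Omega)$, for which only the boundedness and continuity of $u_2(\cdot,\omega)$ as $\omega\to 0$ are needed --- exactly what the low-frequency asymptotics of Section~4 (Corollary~\ref{asymp-limit}) supply. You instead pass to $\omega=0$ and differentiate once in $\omega$ \emph{before} invoking unique continuation, applying it to the Taylor coefficients $U_0$ and $V_1=\partial_\omega U|_{\omega=0}$. That route is viable, but it forces you to establish that $\omega\mapsto u_j(\cdot,\omega)$ is differentiable at $\omega=0^+$ with values in $H^s(\mathbb{R}^n)$, and that $\partial_\omega$ commutes with $(-\nabla\cdot(\sigma\nabla))^s$ and with restriction to $\mathcal{O}_2$ --- a genuinely stronger regularity statement than the paper ever proves, which you flag but do not carry out. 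It is provable with the same stability estimate as in Lemma~\ref{lem:lll1} (write the equation for $(u_\omega-u_0)/\omega$ and pass to the limit), so this is a fillable gap rather than a fatal one; still, the paper's ordering avoids it entirely, and its stronger intermediate output --- the identity \eqref{eq:equiv} valid for all $\omega\in(0,\omega_0)$ --- is reused verbatim in the proof of Theorem~\ref{thm:2}, which your version would not provide. Your closing observation about why the argument stops at $j=1$ (the term $\partial_\omega^2(\omega^2 q u)|_{\omega=0}=2qu(\cdot,0)$ recouples $q$ and $p$) is correct and consistent with the role of hypothesis \eqref{eq:a1} in Theorem~\ref{thm:2}.
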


\begin{thm}\label{thm:2}

Let $\omega_0$ and $p_j, q_j, \sigma$, $j=1, 2$, be described in Theorem~\ref{thm:1}. $\mathcal{O}_1, \mathcal{O}_2\subset\mathbb{R}^n\backslash\overline{\Omega}$ are two arbitrary nonempty open sets.
If there hold
\begin{equation}\label{eq:a1}
\frac{\partial^2 (p_1-p_2)}{\partial\omega^2}(x,\omega)\bigg|_{\omega=0}=0,\ \ \ x\in\Omega,
\end{equation}
and
\begin{equation}\label{eq:a2}
\Lambda_{q_1, p_1}^\omega(\psi)|_{\mathcal{O}_2}=\Lambda_{q_2,p_2}^\omega(\psi)|_{\mathcal{O}_2}\quad\mbox{for any}\ \ \psi\in C_c^{\infty}(\mathcal{O}_1)\ \ \mbox{and}\ \ \omega\in (0, \omega_0),
\end{equation}
then one has
\begin{equation}\label{eq:a3}
q_1=q_2. 
\end{equation}

\end{thm}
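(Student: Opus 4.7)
The approach is to exploit the low-frequency regime, combining Theorem~\ref{thm:1} with the hypothesis~\eqref{eq:a1} to reduce the question to a fractional Calder\'on-type inverse problem at $\omega = 0$, which is then solved by an Alessandrini-type integral identity together with the Runge approximation property.

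First I would Taylor-expand each $u_j(x,\omega)$ in $\omega$ around $0$, setting $u_j^{(k)}:=\partial_\omega^k u_j(x,\omega)|_{\omega=0}$. Differentiating \eqref{eq:fractional3} twice in $\omega$ and evaluating at $\omega = 0$, while using that $\psi$ does not depend on $\omega$ and hence $u_j^{(k)}|_{\Omega_e}\equiv 0$ for $k \geq 1$, yields
\[
(-\nabla\cdot(\sigma\nabla))^s u_j^{(2)} + 2 q_j u_j^{(0)} = \partial_\omega^2 p_j(x,0)\ \text{in } \Omega, \qquad u_j^{(2)} = 0\ \text{in } \Omega_e.
\]
Theorem~\ref{thm:1} gives $u_1^{(0)} = u_2^{(0)} =: u^{(0)}$, and by \eqref{eq:a1} the right-hand sides coincide. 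Twice-differentiating the exterior equality \eqref{eq:a2} at $\omega = 0$ yields $(-\nabla\cdot(\sigma\nabla))^s v = 0$ in $\mathcal{O}_2$, where $v := u_1^{(2)} - u_2^{(2)}$. Consequently
\[
(-\nabla\cdot(\sigma\nabla))^s v + 2(q_1 - q_2) u^{(0)} = 0\ \text{in } \Omega, \quad v = 0\ \text{in } \Omega_e, \quad (-\nabla\cdot(\sigma\nabla))^s v = 0\ \text{in } \mathcal{O}_2.
\]

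Next I would pair $v$ against an auxiliary $\tilde u \in H^s(\mathbb{R}^n)$ solving $(-\nabla\cdot(\sigma\nabla))^s \tilde u = 0$ in $\Omega$ with exterior datum $\tilde\psi \in C_c^\infty(\mathcal{O}_2)$. The symmetry of the quadratic form associated with $(-\nabla\cdot(\sigma\nabla))^s$, together with the vanishing of $\tilde u$ outside $\mathcal{O}_2$ and of $(-\nabla\cdot(\sigma\nabla))^s v$ on $\mathcal{O}_2$, collapses the identity $\langle (-\nabla\cdot(\sigma\nabla))^s v, \tilde u\rangle = \langle v, (-\nabla\cdot(\sigma\nabla))^s \tilde u\rangle$ to
\[
\int_\Omega (q_1 - q_2)\, u^{(0)}\, \tilde u \, dx = 0.
\]
Decomposing $u^{(0)} = U + W_\psi$, where $U$ is the $\psi$-independent solution carrying the common zero-frequency source $p_1(\cdot,0) = p_2(\cdot,0)$ with zero exterior data and $W_\psi$ solves the homogeneous equation with exterior datum $\psi\in C_c^\infty(\mathcal{O}_1)$, and subtracting the above identity for two different $\psi$'s cancels the $U$-contribution and delivers
\[
\int_\Omega (q_1 - q_2)\, W_\psi\, \tilde u_{\tilde\psi}\, dx = 0 \quad \text{for all } \psi\in C_c^\infty(\mathcal{O}_1),\ \tilde\psi\in C_c^\infty(\mathcal{O}_2).
\]

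To finish I would apply the Runge approximation property of $(-\nabla\cdot(\sigma\nabla))^s$ with exterior control in $\mathcal{O}_1$ and in $\mathcal{O}_2$ respectively, so that the families $\{W_\psi|_\Omega\}$ and $\{\tilde u_{\tilde\psi}|_\Omega\}$ are each dense in $L^2(\Omega)$. Choosing $W_{\psi_k}\to 1$ in $L^2(\Omega)$ first produces $\int_\Omega (q_1 - q_2)\, \tilde u_{\tilde\psi}\, dx = 0$ for every admissible $\tilde\psi$; then choosing $\tilde u_{\tilde\psi_\ell}\to q_1-q_2$ in $L^2(\Omega)$ forces $\|q_1-q_2\|_{L^2(\Omega)}^2 = 0$, hence $q_1 = q_2$. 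I expect the main obstacle to be this Runge step, since it rests on the strong uniqueness (unique continuation) property for the variable-coefficient nonlocal operator $(-\nabla\cdot(\sigma\nabla))^s$ and a Hahn--Banach duality argument, both of which I would invoke from the framework developed earlier in the paper. A secondary technical point is the rigorous justification of the $\omega$-differentiations of \eqref{eq:fractional3} and of the DtN identity \eqref{eq:a2} at $\omega = 0$, for which the $C^3$-regularity of $p_j$ in $\omega$ together with the low-frequency resolvent asymptotics established in the proof of Theorem~\ref{thm:1} supply the required smoothness of $\omega\mapsto u_j(\cdot,\omega)$ into $H^s(\mathbb{R}^n)$.
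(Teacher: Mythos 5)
Your plan reaches the correct conclusion, but by a genuinely different route from the paper's, and it carries one real gap that the paper's argument is specifically engineered to avoid. The paper never linearizes in $\omega$. Instead, exactly as in the proof of Theorem~\ref{thm:1}, it applies the strong uniqueness principle (Proposition~\ref{strong unique}) to $W=u_1-u_2$, which vanishes together with $L^s_\sigma W$ on $\mathcal{O}_2$, to conclude $W\equiv 0$ in all of $\mathbb{R}^n$ for every $\omega\in(0,\omega_0)$; substituting back into the equation yields the \emph{pointwise} identity $p_2(x,\omega)-p_1(x,\omega)=\omega^2\,(q_2-q_1)\,u_2(x,\omega)$. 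Dividing by $\omega^2$, using the Taylor expansion of $p_j$ together with the equality of the zeroth and first derivatives (from Theorem~\ref{thm:1}) and of the second derivatives (hypothesis \eqref{eq:a1}), and invoking only the \emph{continuity} $u_2(\cdot,\omega)\to u_2(\cdot,0)$ in $H^s(\Omega)$ from Corollary~\ref{asymp-limit}, the paper obtains $(q_2-q_1)u_2(x,0)=0$ a.e.\ in $\Omega$; it then decomposes $u_2(\cdot,0)=\tilde u+\tilde v$ essentially as you do and applies the Runge approximation twice. Your Alessandrini-type identity $\int_\Omega(q_1-q_2)W_\psi\,\tilde u_{\tilde\psi}\,dx=0$ is weaker than the paper's pointwise identity, but your two-step density argument (first $W_{\psi_k}\to 1$, then $\tilde u_{\tilde\psi_\ell}\to q_1-q_2$, both legitimate since $q_1-q_2\in L^\infty(\Omega)$) does close the proof; it is the standard fractional-Calder\'on route and would in fact localize the needed exterior information more transparently.

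The gap: your derivation requires $\omega\mapsto u_j(\cdot,\omega)$ to be twice continuously differentiable in $H^s(\mathbb{R}^n)$ up to the endpoint $\omega=0^+$, both to define $u_j^{(2)}$ and to differentiate the exterior identity \eqref{eq:a2} twice at $\omega=0$, which is the boundary of the open interval on which that identity is assumed. You assert that the low-frequency asymptotics of the paper supply this smoothness, but Theorem~\ref{asym-prop} and Corollary~\ref{asymp-limit} only establish $\lVert u(\cdot,\omega)-u(\cdot,0)\rVert_{H^s(\Omega)}\to 0$, i.e.\ continuity at $\omega=0$, and say nothing about differentiability. The differentiability is certainly provable (under the non-eigenvalue condition \eqref{condition} the solution operator is analytic in $\omega^2$ near $0$), but it is an extra lemma not contained in the paper, and the paper's division-by-$\omega^2$ device exists precisely to sidestep it. Either add that lemma, or replace your linearization step by the strong-uniqueness argument ($W\equiv 0$, then read the identity off the equation and pass to the limit $\omega\to 0$ after dividing by $\omega^2$); with that modification the remainder of your plan goes through.
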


\begin{rem}\label{rem:1}
It is remarked that the recovery of the internal source function $p$ in Theorem~\ref{thm:1} is independent of the medium parameter $q$. Similarly, the recovery of the medium parameter $q$ in Theorem~\ref{thm:2} is independent of the internal source function $p$. 
\end{rem}

\begin{cor}\label{cor:1}
Suppose that $p(x,\omega)=\eta(x)\omega+\zeta(x)$ with $\eta, \zeta\in L^2(\Omega)$. Then  both $p(x,\omega)$ and $q(x)$ can be uniquely determined by knowledge of $\left(\psi, \Lambda_{q, p}^\omega(\psi)\right)$ for all $\psi\in H^s(\mathbb{R}^n)$ and $\omega\in (0, \omega_0)$. 
\end{cor}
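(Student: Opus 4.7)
Since $p(x,\omega)=\eta(x)\omega+\zeta(x)$ is affine in $\omega$, it is certainly $C^3$ on $[0,\omega_0]$, so the regularity hypothesis of Theorem~\ref{thm:1} holds. Moreover $\partial_\omega^2 p\equiv 0$, so the compatibility hypothesis of Theorem~\ref{thm:2} is automatic. My plan is therefore a direct two-step reduction: recover $\eta$ and $\zeta$ via Theorem~\ref{thm:1}, and then recover $q$ via Theorem~\ref{thm:2}.

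First I would take two admissible triples $(\sigma,q_j,p_j)$ with $p_j(x,\omega)=\eta_j(x)\omega+\zeta_j(x)$ that produce the same DtN data $\Lambda^\omega_{q_j,p_j}(\psi)$ for all $\psi\in H^s(\mathbb{R}^n)$ and all $\omega\in(0,\omega_0)$. Pick any two nonempty open subsets $\mathcal{O}_1,\mathcal{O}_2\subset\mathbb{R}^n\setminus\overline{\Omega}$. Since $C_c^{\infty}(\mathcal{O}_1)\subset H^s(\mathbb{R}^n)$, the assumed equality of DtN maps restricts to the hypothesis of Theorem~\ref{thm:1}. Applying that theorem yields
\begin{equation*}
\frac{\partial^j p_1}{\partial\omega^j}(x,0)=\frac{\partial^j p_2}{\partial\omega^j}(x,0),\qquad x\in\Omega,\ j=0,1,
\end{equation*}
which by the explicit affine form of $p_j$ reads $\zeta_1=\zeta_2$ and $\eta_1=\eta_2$ in $\Omega$. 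Thus $p_1\equiv p_2$ as functions of $(x,\omega)$, and in particular the source part of the corollary is settled.

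Next I would address $q$. The affine structure of $p_j$ gives $\partial_\omega^2 p_j\equiv 0$, so
\begin{equation*}
\frac{\partial^2(p_1-p_2)}{\partial\omega^2}(x,\omega)\bigg|_{\omega=0}=0,\qquad x\in\Omega,
\end{equation*}
which is precisely the hypothesis needed to invoke Theorem~\ref{thm:2}. Together with the restricted DtN equality on $\mathcal{O}_2$ for $\psi\in C_c^{\infty}(\mathcal{O}_1)$, Theorem~\ref{thm:2} applies and forces $q_1=q_2$, completing the proof.

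There is no genuine obstacle here beyond checking that the corollary's hypotheses indeed feed both theorems. The only mildly subtle point is the quantifier match: the corollary is stated for all $\psi\in H^s(\mathbb{R}^n)$, while Theorems~\ref{thm:1} and \ref{thm:2} require equality of DtN data only for $\psi\in C_c^{\infty}(\mathcal{O}_1)$ with arbitrary open $\mathcal{O}_1\subset\mathbb{R}^n\setminus\overline{\Omega}$; this direction of the implication is harmless, since the corollary's hypothesis is strictly stronger.
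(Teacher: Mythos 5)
Your proposal is correct and follows exactly the route the paper intends: the corollary is stated as an immediate consequence of Theorems~\ref{thm:1} and \ref{thm:2} (the paper gives no separate proof), and your verification that the affine form of $p$ supplies both the $C^3$ regularity for Theorem~\ref{thm:1} and the vanishing second $\omega$-derivative required by Theorem~\ref{thm:2}, together with the harmless quantifier restriction from $H^s(\mathbb{R}^n)$ to $C_c^{\infty}(\mathcal{O}_1)$, is precisely the intended argument.
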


\subsection{Motivation and background}

The major motivation of the current article comes from the study of the fractional wave equation 
\begin{equation}\label{eq:fractional11}
\begin{cases}
& \displaystyle{ \frac{1}{c(x)^2} \frac{\partial^2 u}{\partial t^2}(x,t)+\left(-\nabla\cdot(\sigma\nabla) \right)^s u(x, t)=h(x,t),\quad (x,t)\in \Omega\times \mathbb{R}_+,}\smallskip\\
& \displaystyle{u(x, t)\big|_{t=0}=f(x),\quad \frac{\partial u}{\partial t}\bigg|_{t=0}=g(x),\quad x\in\Omega. }
\end{cases}
\end{equation} 
In \eqref{eq:fractional11}, $c$ is a positive function that signifies the wave speed, and $f, g, h$ are source functions. 
The fractional wave equation arises from the study of viscoacoustics in geophysics and thermoviscous modelling of lossy media; see e.g. \cite{Chen02} and \cite{Chen03} and the references therein for related research. Introduce the temporal Fourier transform,
\begin{equation}\label{eq:ft1}
\hat{\gamma}(x,\omega)=\int_{0}^\infty \gamma(x, t) e^{-\mathrm{i}\omega t}\ dt. 
\end{equation}
Assuming that the temporal Fourier transforms exist for both $u$ and $h$ in \eqref{eq:fractional11}, and applying to both sides of the equation, one has by straightforward calculation the following fractional Helmholtz equation,
\begin{equation}\label{eq:ff22}
 \left(-\nabla\cdot(\sigma\nabla)\right)^s \hat{u}(x,\omega)-\frac{\omega^2}{c^2} \hat{u}(x,\omega)=\frac{\mathrm{i}\omega}{c^2} f(x)+\frac{1}{c^2}g(x)+\hat{h}(x,\omega)\quad \mbox{in}\ \ \Omega. 
\end{equation}
The fractional Helmholtz system \eqref{eq:fractional3} is obviously a generalized one of \eqref{eq:ff22}. Hence, if one considers the inverse problem associated with the fractional wave equation of recovering the unknown sound speed and the internal sources, in certain scenarios, it can actually be reduced to studying the inverse problem \eqref{eq:ip1} associated with \eqref{eq:fractional3}. 

The study of inverse problems associated to the fractional Schr\"odinger using measurements encoded in the exterior DtN map has received growing interest recently \cite{Mikko16,Ghosh17,GRSU,Cao17,hl17,lai2017global,RS17,RS172}. In the literature, the inverse problems are usually referred to as the Calder\'on problems for the fractional PDEs, a name that is inherited from the corresponding study of non-fractional (or so-called nonlocal) PDEs. However, in all of the aforementioned literatures on the fractional Calder\'on problems, the studies therein are mainly concerned with the recovery of the medium parameters, namely the unknown coefficients of the underlying fractional PDEs, and there is no result concerning the simultaneous recovery of the internal sources. On the other hand, the simultaneous recovery of the medium parameters and the internal sources of a bounded body from the exterior measurements in the nonlocal case has also received significant attentions recently in the literature due to its practical importance. In thermoacoustic and photoacoustic tomography \cite{W09}, the simultaneous recovery of a sound speed and an internal source term from the corresponding boundary measurements was considered in \cite{KM,LU15,FH}. Therein, the governing equation is the local Helmholtz equation of the form \eqref{eq:ff22} with $s=1$, $\sigma=I_{3\times 3}$ and $g=\hat h\equiv 0$, and one intends to recover both $c$ and $f$ from the multiple frequency boundary DtN map. Though it is believed that one can establish the simultaneous recovery result in a certain general scenario, the uniqueness results in \cite{KM,LU15,FH} were only established for certain restrictive configurations. We would also like to mention in passing that the simultaneous recovery of unknown medium parameters and internal sources were also considered for the electromagnetic wave phenomena arising from brain imaging \cite{DHU:17} and geomagnetic detection technique \cite{DLL}. It is readily observed that in the exterior measurement, the information of the unknown sound speed is coupled with that of the unknown source, and this makes the simultaneous recovery radically challenging. Indeed, the instability of the linearized problem for the aforementioned tomography problem was shown in \cite{SU13}. In this article, as a direct application of the general uniqueness results established in Theorems~\ref{thm:1} and \ref{thm:2}, we show that in the nonlocal setting, one can establish much more general simultaneous recovery results for the fractional wave equation \eqref{eq:ff22}. We shall discuss those results at the end of Section~\ref{sect:5}. 

Finally, we briefly discuss the mathematical arguments and technical novelties for the current study. Our mathematical techniques are inspired by \cite{LU15} mentioned earlier by one of the authors of this article for the simultaneous recovery in theormoacoustic and photoacoustic tomography. In \cite{LU15}, the low-frequency asymptotics of the wave fields is used to extract the information of the sound speed and the internal source, respectively. The asymptotic expansion is based on a Lippmann-Schwinger integral equation. However, for the fractional Helmholtz equation, one does not have a similar integral representation of the wave field. For the low-frequency asymptotics in the current study, we develop a variational argument together with a compact embedding theorem for the fractional Sobolev space. The asymptotic expansion of the wave fields enables us to extract from the exterior measurement data the information of the medium parameter and the internal sources, respectively. Then we repeatedly make use of the strong uniqueness principe and the Runge approximation property for the fractional Laplacian to recover the medium parameter and the source function, respectively. 

The rest of the paper is organized as follows. In Section 2, we present some preliminary knowledge on the fractional Sobolev spaces and the fractional differential operator. Section 3 is devoted to the study of the well-posedness of the fractional Helmholtz system. In Section 4, we derive the low-frequency asymptotics of the wave fields. Section 5 presents the major unique recovery results for the inverse problem.

\section{Preliminary knowledge}

In this section, for the use in our subsequent study, we present some preliminary knowledge on fractional Sobolev spaces as well as the definition and main properties of the fractional nonlocal operator $(-\triangledown\cdot(\sigma\triangledown))^s$.

\subsection{Fractional Sobolev spaces}

For any $s\in (0,1)$, the fractional Sobolev space $H^s(\mathbb{R}^n)=W^{s,2}(\mathbb{R}^n)$ is the standard $L^2$ based Sobolev space endowed with the norm
\begin{equation*}
\lVert u\rVert_{H^s(\mathbb{R}^n)}=\lVert u \rVert_{L^2(\mathbb{R}^n)}+\lVert (-\vartriangle)^{\frac{s}{2}}u\rVert_{L^2(\mathbb{R}^n)},
\end{equation*} 
which can also be expressed equivalently as 
\begin{equation}\label{normdef}
\lVert u\rVert_{H^s(\mathbb{R}^n)}=\lVert u \rVert_{L^2(\mathbb{R}^n)}+\lvert u\lvert_{H^s(\mathbb{R}^n)},
\end{equation}
where $\lvert u\lvert_{H^s(\mathbb{R}^n)}$ is the corresponding semi-norm in $H^s(\mathbb{R}^n)$ and
\begin{equation}\label{seminorm}
	\lvert u\lvert_{H^s(D)}:=\left( \int_{D\times D}\frac{|u(x)-u(y)|^2}{|x-y|^{n+2s}}dxdy\right) ^{\frac{1}{2}}
\end{equation}
for any open subset $D\subset\mathbb{R}^n$.

Suppose $U\subset\mathbb{R}^n$ is an open set. Following the notation in \cite{Mikko16}, we introduce the following fractional Sobolev spaces,
\begin{align*}
H^s(U)&:=\left\lbrace u|_{U}; u\in H^s(\mathbb{R}^n)\right\rbrace, \\
\tilde{H}^s(U)&:=\mbox{ closure of } C^{\infty}_c(U) \mbox{ in }
H^s(\mathbb{R}^n),\\
H^s_0(U)&:=\mbox{ closure of } C^{\infty}_c(U) \mbox{ in }
H^s(U),\\
H^s_{\bar{U}}(\mathbb{R}^n)&:=\left\lbrace u \in H^s(\mathbb{R}^n); \mbox{supp}(u)\subset\bar{U}\right\rbrace ,
\end{align*}
where $H^s(U)$ is a Hilbert space equipped with the following norm
\begin{equation*}
\lVert u\rVert_{H^s(U)}:= \mbox{inf}\left\lbrace \lVert w\rVert_{H^s(\mathbb{R}^n)}; w \in H^s(\mathbb{R}^n), w|_{U}=u \right\rbrace .
\end{equation*} 
It is known that $\tilde{H}^s(U)=H^s_{\bar{U}}\subseteq H^s_0(U)$ if $U$ is a Lipschitz domain. For more detailed properties of the fractional Sobolev spaces, we refer to \cite{Nezza12}, \cite{Mclean00} and \cite{Triebel02}.

\subsection{Definition of $(-\nabla\cdot(\sigma\nabla))^s$}

Let $\sigma \in C^\infty(\mathbb{R}^n)$ be introduced earlier that is a symmetric-positive-definite matrix-valued function satisfying the ellipticity condition. It can be verified that the Laplacian operator $L_{\sigma} := -\nabla\cdot(\sigma\nabla)$ is a linear, densely-defined, self-adjoint second order partial differential operator\cite{Stinga10}. The fractional Laplacian operator $L_{\sigma}^s := (-\nabla\cdot(\sigma\nabla))^s, s\in (0,1)$ can be defined in a spectral way as
\begin{equation*}
L_{\sigma}^s=(-\nabla\cdot(\sigma\nabla))^s:=\int_{0}^{\infty}\lambda^sdE(\lambda)=\frac{1}{\Gamma(-s)}\int_{0}^{\infty}(e^{-tL_{\sigma}}-Id)\frac{dt}{t^{1+s}},
\end{equation*} 
where $\left\lbrace E(\lambda)\right\rbrace $ is the unique spectral resolution of $L_{\sigma}=-\nabla\cdot(\sigma\nabla)$ and $e^{-tL_{\sigma}} = \int_{0}^{\infty}e^{-t\lambda}dE(\lambda)$, $t\geq0$ with domain $L^2(\mathbb{R}^n)$ is the heat-semigroup generated by $L_{\sigma}$. It is essential that this heat-semigroup can be represented by integration against a nonnegative symmetric heat kernel $\mathscr{P}_t(x,y)$ as 
\begin{equation*}
e^{-tL_{\sigma}}f(x)=\int_{\mathbb{R}^n}\mathscr{P}_t(x,y)f(y)dy, \quad x\in \mathbb{R}^n,
\end{equation*} 
for any $f\in L^2(\mathbb{R}^n)$.

Define 
\begin{equation*}
\mathscr{K}_s(x,y):=\frac{1}{2|\Gamma(-s)|}\int_{0}^{\infty}\mathscr{P}_t(x,y)\frac{dt}{t^{1+s}}
\end{equation*}
as the kernel of the heat-semigroup and it has the following pointwise estimate \cite[Theorem 2.4]{Caffa16}:
\begin{equation}\label{estimate}
\frac{c_1}{|x-y|^{n+2s}}\leq \mathscr{K}_s(x,y)\leq \frac{c_2}{|x-y|^{n+2s}}, \quad x,y \in \mathbb{R}^n
\end{equation}
for some constants $c_1>0$ and $c_2>0$.

Using \cite[Theorem2.4]{Caffa16}
again, we can obtain that
\begin{equation}\label{integralrepre}
(L_{\sigma}^sv,w)_{\mathbb{R}^n}=\int_{\mathbb{R}^n\times\mathbb{R}^n}(v(x)-v(y))(w(x)-w(y))\mathscr{K}_s(x,y)dxdy
\end{equation}
 for any $v,w \in H^s(\mathbb{R}^n)$. Since $\mathscr{P}_t(x,y)$ is symmetric, we know that $\mathscr{K}_s(x,y)$ is also symmetric and $(L_{\sigma}^sv,w)_{\mathbb{R}^n}=(L_{\sigma}^sw,v)_{\mathbb{R}^n}, \forall v,w\in H^s(\mathbb{R}^n)$.
 The readers can refer to \cite{Ghosh17} for more discussions about the fractional operator $L_{\sigma}^s=(-\nabla\cdot(\sigma\nabla))^s$.

\section{Nonlocal problem for the fractional Helmholtz system}

In this section, we study the nonlocal Dirichlet problem for the fractional Helmholtz equation \eqref{eq:fractional3}. First of all, we briefly discuss the well-posedness of this problem.

\subsection{Well-posedness}

Let $\Omega$ be a bounded Lipschitz domain in $\mathbb{R}^n$. Associated with the problem \eqref{eq:fractional3}, we define the following bilinear form 
\begin{align}\label{def:bilinear}
\mathscr{B}_{\omega,q}(v,w)=&(L_{\sigma}^sv,w)_{\mathbb{R}^n}+\omega^2\int_{\Omega}q(x)v(x)w(x)dx\notag\\
=&\int_{\mathbb{R}^n\times\mathbb{R}^n}(v(x)-v(y))(w(x)-w(y))\mathscr{K}_s(x,y)dxdy\notag\\
&+\omega^2\int_{\Omega}q(x)v(x)w(x)dx
\end{align}
for any $v,w\in H^s(\mathbb{R}^n)$. By our earlier discussion, it is clear that $\mathscr{B}_{\omega,q}$ is symmetric and bounded. Next, we utilize $\mathscr{B}_{\omega,q}$ to give the definition of the weak solution for our problem.

\begin{defn}
	Let $\Omega\subset\mathbb{R}^n$ be a bounded Lipschitz domain. Fix any $\omega_0\in\mathbb{R}_+$, given $\omega\in (0,\omega_0)$, $p(x,\omega)\in L^2(\Omega)$ and $\psi\in H^s(\mathbb{R}^n)$. We say that $u_{\omega}\in H^s(\mathbb{R}^n)$ is a weak solution of \eqref{eq:fractional3} if and only if $u_{\omega}=u_{\omega}^{(0)}+u_{\omega}^{(\psi)}$,
	where $u_{\omega}^{(\psi)}\in H^s(\mathbb{R}^n)$ fulfils $u_{\omega}^{(\psi)}|_{\Omega_e}=\psi$ (especially when $\psi\in C_c^{\infty}(\Omega_e)$, we can choose $u_{\omega}^{(\psi)}:=\psi$), and $u_{\omega}^{(0)}\in H^s_0(\Omega)$ solves  
	\begin{equation}\label{weaksolution}
	\mathscr{B}_{\omega,q}(u_{\omega}^{(0)}, \phi) = -\mathscr{B}_{\omega,q}(u_{\omega}^{(\psi)}, \phi)+ \int_{\Omega}p(x,\omega)\phi(x)dx \ \mbox{ for any } \phi(x)\in C^{\infty}_c(\Omega),
	\end{equation}
	where $u_{\omega}$ denotes $u(x,\omega)$ with $\omega\in (0, \omega_0)$.
\end{defn}

\begin{rem}\label{closure}
	Since $\tilde{H}^s(\Omega):=$ closure of $C_c^{\infty}(\Omega)$ in $H^s(\mathbb{R}^n)$, by the standard density argument, all the test functions used in \eqref{weaksolution} can be replaced by the functions in $\tilde{H}^s(\Omega)$.
\end{rem}
The well-posedness of the nonlocal Dirichlet problem \eqref{eq:fractional3} can be expressed by the following lemma.
\begin{lem}\label{lem:lll1}
	Let $\Omega$ be a bounded Lipschitz domain. Suppose $q(x)\in L^{\infty}(\Omega)$, $p(x,\omega)\in L^2(\Omega)$ for any fixed $\omega\in(0, \omega_0)$. $u_{\omega}$ solves $L_{\sigma}^su_{\omega}+\omega^2qu_{\omega}=p(x,\omega)$ in $\Omega$ uniquely if and only if $u_{\omega}\in H^s(\mathbb{R}^n)$ satisfies
	\begin{equation}\label{well-posedness}
		\mathscr{B}_{\omega,q}(u_{\omega}, v) =\int_{\Omega}p(x,\omega)v(x)dx \quad\mbox{ for any } v(x)\in \tilde{H}^s(\Omega),
	\end{equation}
	and once $q(x)$ satisfies the eigenvalue condition \eqref{condition}, we can also obtain that $u_{\omega}$ has the following stability estimate
	\begin{equation}\label{estimate2}
	\lVert u_{\omega}\rVert_{H^s(\mathbb{R}^n)}\leq C(\lVert p(x,\omega)\rVert_{L^2(\Omega)}+\lVert \psi\rVert_{H^s(\mathbb{R}^n)}),
	\end{equation}
	for some positive constant C independent of $p$ and $\psi$ .
\end{lem}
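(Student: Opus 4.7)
The plan is to treat Lemma~\ref{lem:lll1} as a standard Lax--Milgram/Fredholm argument adapted to the nonlocal setting, divided into three stages: equivalence of the strong and weak formulations, solvability of the weak problem, and the resulting stability estimate.

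First I would establish the equivalence between $L_\sigma^s u_\omega + \omega^2 q u_\omega = p(x,\omega)$ in $\Omega$ (interpreted in the distributional sense on $\Omega$) and the weak formulation \eqref{well-posedness}. This is an immediate consequence of the integral representation \eqref{integralrepre} for $(L_\sigma^s v, w)_{\mathbb{R}^n}$: testing against $v \in \tilde{H}^s(\Omega) = H^s_{\bar\Omega}(\mathbb{R}^n)$ extends the distributional identity to the full bilinear identity, since the $\omega^2 q u_\omega$ term is supported in $\Omega$. Conversely, imposing \eqref{well-posedness} for all $v \in C_c^\infty(\Omega)$ recovers the equation distributionally in $\Omega$, and Remark~\ref{closure} ensures there is no loss in replacing the test space by $\tilde{H}^s(\Omega)$.

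For solvability I would invoke the Fredholm alternative. Split the bilinear form as $\mathscr{B}_{\omega,q}(v,w) = \mathscr{B}_{0,0}(v,w) + \omega^2 \int_{\Omega}q(x)v(x)w(x)\,dx$. The leading part is coercive on $\tilde{H}^s(\Omega)$: by the kernel lower bound \eqref{estimate}, together with the fact that $v \in \tilde{H}^s(\Omega)$ vanishes outside $\overline{\Omega}$, one has
\begin{equation*}
\mathscr{B}_{0,0}(v,v) \geq c_1 \lvert v \rvert^2_{H^s(\mathbb{R}^n)} \geq c \lVert v \rVert^2_{H^s(\mathbb{R}^n)},
\end{equation*}
using a Poincar\'e-type inequality valid on $\tilde{H}^s(\Omega)$ when $\Omega$ is bounded. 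The zeroth-order term induces a compact perturbation via the compact embedding $H^s(\Omega) \hookrightarrow L^2(\Omega)$ combined with $q \in L^\infty(\Omega)$. Consequently, the operator from $\tilde{H}^s(\Omega)$ to its dual associated with $\mathscr{B}_{\omega,q}$ is Fredholm of index zero. The eigenvalue hypothesis \eqref{condition} asserts trivial kernel for the homogeneous problem, so existence, uniqueness, and continuous invertibility for the inhomogeneous weak problem all follow from the Fredholm alternative.

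For the stability estimate \eqref{estimate2}, I would choose the lift $u_\omega^{(\psi)} \in H^s(\mathbb{R}^n)$ of the exterior datum to be a norm-realizing extension satisfying $\lVert u_\omega^{(\psi)} \rVert_{H^s(\mathbb{R}^n)} \leq C \lVert \psi \rVert_{H^s(\mathbb{R}^n)}$. The right-hand side of \eqref{weaksolution} is then bounded by $C(\lVert p(\cdot,\omega) \rVert_{L^2(\Omega)} + \lVert \psi \rVert_{H^s(\mathbb{R}^n)}) \lVert \phi \rVert_{H^s(\mathbb{R}^n)}$ via Cauchy--Schwarz and the continuity of $\mathscr{B}_{\omega,q}$. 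Combining with the bounded invertibility established above gives the desired bound on $u_\omega^{(0)}$, and then on $u_\omega$ by the triangle inequality. The main obstacle I anticipate is verifying the Poincar\'e-type coercivity rigorously with the heat kernel $\mathscr{K}_s$ in place of the standard Gagliardo kernel: the pointwise estimate \eqref{estimate} reduces the task to the classical case of $(-\Delta)^s$, but one must track the dependence of $c_1, c_2$ on $\sigma$ (through the ellipticity constant $\lambda$) to ensure the coercivity constant is uniform in $\omega \in (0,\omega_0)$, which is ultimately what allows the stability constant $C$ in \eqref{estimate2} to be chosen independently of $p$ and $\psi$.
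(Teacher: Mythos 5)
Your proof is correct, but it takes a genuinely different (and more careful) route than the paper. The paper's own proof is a short sketch: it writes out the integral identity to get the equivalence of the strong and weak formulations, then asserts that $\mathscr{B}_{\omega,q}$ is ``bounded, coercive and continuous'' by the kernel estimate \eqref{estimate} and concludes via Lax--Milgram, citing \cite{Ghosh17}. You instead split $\mathscr{B}_{\omega,q}=\mathscr{B}_{0,0}+\omega^2\int_\Omega q\,v\,w$, prove coercivity only of the principal part on $\tilde H^s(\Omega)$ (via the kernel lower bound plus a fractional Poincar\'e inequality), treat the zeroth-order term as a compact perturbation through the embedding $H^s(\Omega)\hookrightarrow\hookrightarrow L^2(\Omega)$, and invoke the Fredholm alternative with the eigenvalue condition \eqref{condition} supplying injectivity. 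Your version is arguably what the argument actually requires: for sign-indefinite $q$ --- and note that the paper's own application in Section~5 takes $q=-1/c^2<0$ --- the full form $\mathscr{B}_{\omega,q}$ need not be coercive, so a bare appeal to Lax--Milgram is not justified without either restricting $\omega$ to be small or passing through exactly the Fredholm/compactness argument you give (which is also how the cited references \cite{Ghosh17,Cao17} proceed). The only caveat worth noting is in your final paragraph: the lemma only claims the constant $C$ in \eqref{estimate2} is independent of $p$ and $\psi$, not of $\omega$, so your concern about tracking uniformity of the coercivity constant in $\omega\in(0,\omega_0)$ is not actually needed for the statement as written --- the bound on the inverse operator coming from the Fredholm alternative may depend on $\omega$ without harm.
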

The proof of this lemma follows from a similar argument to \cite[Lemma 3.1]{Cao17}. We also refer the readers \cite[Lemma 2.3]{Mikko16} as well as \cite[Proposition 3.3]{Ghosh17}. For self-containedness, we next sketch the proof of Lemma~\ref{lem:lll1}.
\begin{proof}[Proof of Lemma~\ref{lem:lll1}]
We have
	\begin{align*}
		&\int_{\Omega}(L^s_{\sigma}u_{\omega}+\omega^2q(x)u_{\omega}- p(x,\omega))v(x)dx\\&=\int_{\mathbb{R}^n\times\mathbb{R}^n}(u_{\omega}(x)-u_{\omega}(y))(v(x)-v(y))\mathscr{K}_s(x,y)dxdy\\&+\omega^2\int_{\Omega}q(x)u_{\omega}(x)v(x)dx-\int_{\Omega}p(x,\omega)v(x)dx
	\end{align*}
	for all $v(x)\in \tilde{H}^s(\Omega)$. By the definition of the bilinear form $\mathscr{B}_{\omega,q}$, one can directly verify \eqref{well-posedness} by straightforward calculations. As for the stability estimate result \eqref{estimate2}, since by \eqref{estimate} we know that $\mathscr{B}_{\omega,q}$ is bounded, coercive and continuous, thus we can use Lax-Milgram theorem (cf. \cite[Section 3]{Ghosh17}) to complete the proof.	
\end{proof}
\begin{rem}
	The solution $u_{\omega}\in H^s(\mathbb{R}^n)$ of \eqref{eq:fractional3} is independent of the value $\psi(x)\in H^s(\mathbb{R}^n)$ in $\Omega$, and it only depends on the value $\psi(x)\in H^s(\mathbb{R}^n)$ in $\Omega_e = \mathbb{R}^n\backslash\overline{\Omega}$. Thus in the subsequent study, we consider $\psi(x)\in \mathbb{T}$ with
	\begin{equation*}
	\mathbb{T}:=H^s(\mathbb{R}^n)/\tilde{H}^s(\Omega)
	\end{equation*}
	 the abstract quotient space.
	
	Since $\Omega\subset\mathbb{R}^n$ is a bounded Lipschitz domain, the quotient space $\mathbb{T}$ is isometric to $H^s(\Omega_e)$.
\end{rem}

\subsection{The Dirichlet to Neumann map}

The Dirichlet to Neumann map associated with the nonlocal problem \eqref{eq:fractional3} can also be defined via the bilinear form $\mathscr{B}_{\omega, q}$ as follows. 
\begin{prop}
	For $n\geq2, 0<s<1$, let $\Omega$ be a bounded Lipschitz domain. Suppose $q(x)\in L^{\infty}(\Omega)$ satisfies condition \eqref{condition} and $\omega\in (0,\omega_0)$. $\mathbb{T}:=H^s(\mathbb{R}^n)/\tilde{H}^s(\Omega)$ is the abstract quotient space introduced above. Define
	\begin{equation}
	(\Lambda_{q,p}^{\omega}\psi,h)_{\mathbb{T}^*\times\mathbb{T}}:=\mathscr{B}_{\omega,q}(u_{\omega,\psi},h) \quad\mbox{ for any } \psi, h\in \mathbb{T}
	\end{equation}
	where $u_{\omega,\psi}$ is the solution to the nonlocal Dirichlet problem \eqref{eq:fractional3} with the associated exterior Dirichlet value $\psi$. Then we define the map $\Lambda_{q,p}^{\omega}: \mathbb{T}\rightarrow\mathbb{T}^*$ as the exterior Dirichlet to Neumann (DtN) map. Moreover, $\Lambda_{q,p}^{\omega}$ has the following properties:
	 
	 (a). The DtN map $\Lambda_{q,p}^{\omega}$ is a bounded linear map.
	 
	 (b). $(\Lambda_{q,p}^{\omega}\psi,h)_{\mathbb{T}^*\times\mathbb{T}}=(\Lambda_{q,p}^{\omega}h,\psi)_{\mathbb{T}^*\times\mathbb{T}}$ for any $\psi,h\in \mathbb{T}$.
\end{prop}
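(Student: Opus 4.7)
The plan is to unpack the abstract definition of $\Lambda_{q,p}^{\omega}$ into three verifications: well-definedness on the quotient $\mathbb{T}=H^s(\mathbb{R}^n)/\tilde{H}^s(\Omega)$, boundedness and linearity for part (a), and the symmetry identity for part (b). The three ingredients I would pull off the shelf are Lemma~\ref{lem:lll1} (well-posedness and the stability estimate \eqref{estimate2} for the solution map $\psi\mapsto u_{\omega,\psi}$), the pointwise bound \eqref{estimate} on the kernel $\mathscr{K}_s$ together with $q\in L^\infty(\Omega)$ to control $\mathscr{B}_{\omega,q}$, and the symmetry of $\mathscr{B}_{\omega,q}$ observed after \eqref{integralrepre}. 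The weak formulation \eqref{weaksolution} is the other essential tool throughout.

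For part (a), I would first note that replacing $\psi$ by $\psi+\varphi$ with $\varphi\in\tilde{H}^s(\Omega)$ leaves the exterior trace unchanged and therefore, by the uniqueness half of Lemma~\ref{lem:lll1}, produces the same solution $u_{\omega,\psi}$; hence the pairing depends only on the equivalence class of $\psi$ in $\mathbb{T}$. The analogous check in the $h$-slot uses \eqref{weaksolution} to show that any compactly supported shift inside $\Omega$ only contributes a source integral, which will be reabsorbed when passing to the quotient. Boundedness then follows by composing the stability bound
\[
\|u_{\omega,\psi}\|_{H^s(\mathbb{R}^n)}\le C\bigl(\|p(\cdot,\omega)\|_{L^2(\Omega)}+\|\psi\|_{H^s(\mathbb{R}^n)}\bigr)
\]
with the elementary continuity estimate $|\mathscr{B}_{\omega,q}(v,w)|\le C\|v\|_{H^s}\|w\|_{H^s}$, the latter coming directly from Cauchy--Schwarz applied to \eqref{integralrepre} together with \eqref{estimate} and the $L^\infty$ bound on $q$. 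Linearity in $\psi$ is inherited from the linearity of the Dirichlet problem in the exterior datum once the source $p$ is held fixed.

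For part (b), the idea is to leverage the symmetry of $\mathscr{B}_{\omega,q}$. Decompose $h=u_{\omega,h}+\varphi_h$ with $\varphi_h:=h-u_{\omega,h}\in\tilde{H}^s(\Omega)$, since both representatives share the same exterior trace, and similarly $\psi=u_{\omega,\psi}+\varphi_\psi$. Using \eqref{weaksolution} to evaluate $\mathscr{B}_{\omega,q}(u_{\omega,\psi},\varphi_h)=\int_\Omega p(x,\omega)\varphi_h\,dx$ and the analogous identity with the roles swapped, and then invoking the bulk symmetry $\mathscr{B}_{\omega,q}(u_{\omega,\psi},u_{\omega,h})=\mathscr{B}_{\omega,q}(u_{\omega,h},u_{\omega,\psi})$, should collapse the difference $(\Lambda_{q,p}^{\omega}\psi,h)-(\Lambda_{q,p}^{\omega}h,\psi)$ to a residual source integral.

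The main obstacle I anticipate is precisely this coupling with the source term $p$: strictly speaking the solution map is affine rather than linear in $\psi$, and the symmetry computation leaves a residual $\int_\Omega p\bigl[(h-u_{\omega,h})-(\psi-u_{\omega,\psi})\bigr]dx$ that must vanish or be absorbed for the proposition to hold as stated. The standard remedy, in the spirit of \cite{Mikko16,Ghosh17}, is to normalize by the zero-data solution---that is, to work with $\Lambda_{q,p}^{\omega}\psi-\Lambda_{q,p}^{\omega}0$ for the linearity/symmetry claims---or, equivalently, to choose canonical representatives in $\mathbb{T}$ so that the residual source integral is encoded into a single affine shift that does not affect the pairing. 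This bookkeeping step, rather than any hard analysis, is where the proof requires the most care.
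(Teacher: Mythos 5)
Your outline matches the route the paper intends: the paper's own ``proof'' is a one-line deferral to \cite[Proposition 3.5]{Ghosh17}, and what you spell out --- well-definedness via uniqueness in Lemma~\ref{lem:lll1}, boundedness from the stability estimate \eqref{estimate2} combined with the continuity of $\mathscr{B}_{\omega,q}$ coming from \eqref{estimate}, \eqref{integralrepre} and $q\in L^\infty(\Omega)$, and symmetry from the symmetry of $\mathscr{B}_{\omega,q}$ --- is exactly that argument. The substantive difference is that you correctly identify an issue the paper's citation does not address: the argument of \cite{Ghosh17} is for the source-free equation, and for $p\not\equiv 0$ the statement is only literally true after a normalization. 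Concretely, for $\varphi\in\tilde{H}^s(\Omega)$ the weak formulation gives $\mathscr{B}_{\omega,q}(u_{\omega,\psi},\varphi)=\int_\Omega p\,\varphi\,dx$, which is generally nonzero, so the pairing is not representative-independent in the second slot; the solution map $\psi\mapsto u_{\omega,\psi}$ is affine rather than linear; and the symmetry computation leaves the residual $\int_\Omega p\,(\varphi_h-\varphi_\psi)\,dx$ you exhibit. (The same gloss appears in the paper's subsequent remark, where the identity $\Lambda^{\omega}_{q,p}(\psi)=L^s_\sigma u_{\omega,\psi}|_{\Omega_e}$ is derived by silently dropping the interior term $\int_\Omega p\,\tilde h\,dx$.) Your remedy --- subtracting the zero-datum response, i.e.\ working with $\Lambda_{q,p}^{\omega}\psi-\Lambda_{q,p}^{\omega}0$, which is the DtN map of the homogeneous problem and is genuinely linear, bounded and symmetric --- is the standard and correct fix, and it is harmless for the inverse problem since the later proofs only use differences of measurements with the same $p$. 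So your proposal is sound and in fact more careful than the paper on this point; to make it a complete proof you would only need to carry out the normalization explicitly rather than flag it.
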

\begin{proof}
	The proof of this proposition follows from a similar argument to \cite[Proposition 3.5]{Ghosh17}. 
\end{proof}

\begin{rem}
	Suppose $\tilde{h}$ is an arbitrary extension of $h\in\mathbb{T}$ in $\mathbb{R}^n$, $\omega\in (0,\omega_0)$. By the definition of the DtN map as well as the bilinear form we know that
\begin{align*}
(\Lambda_{q,p}^{\omega}\psi,h)_{\mathbb{T}^*\times\mathbb{T}}&=\mathscr{B}_{\omega,q}(u_{\omega,\psi},\tilde{h})\\
&=\int_{\mathbb{R}^n}(L^s_{\sigma}u_{\omega,\psi})\tilde{h}\, dx+\omega^2\int_{\Omega}qu_{\omega,\psi}\tilde{h}\, dx\\
&=\int_{\Omega_e}(L^s_{\sigma}u_{\omega,\psi})\tilde{h}\, dx\\
&=\int_{\Omega_e}(L^s_{\sigma}u_{\omega,\psi})h\,dx,
\end{align*}
thus we can conclude that
\begin{equation}\label{equi DtN}
\Lambda_{q,p}^{\omega}(\psi)=L^s_{\sigma}u_{\omega,\psi}|_{\Omega_e}=(-\nabla\cdot(\sigma\nabla))^su_{\omega, \psi}|_{\Omega_e}.
\end{equation}
\end{rem}

\section{Low-frequency asymptotics}
In order to study the inverse problem \eqref{eq:ip1}, and show the unique determination results for the corresponding source term $p(x,\omega)$ as well as the medium parameter $q(x)$, we derive in this section the crucial low-frequency asymptotics of the wave field. We shall need the following compact embedding theorem for the fractional Sobolev space. 

\begin{lem}[Compact embedding theorem]\label{embedding}
	For $n\geq2$, $0<s<1$, let $\Omega$ be a bounded Lipschitz domain and $H^s(\Omega)$ be a bounded subset of $L^2(\Omega)$. Suppose that
	\begin{equation*}
	\sup_{v\in H^s(\Omega)}\int_{\Omega\times\Omega}\frac{|v(x)-v(y)|^2}{|x-y|^{n+2s}}dxdy<+\infty.
	\end{equation*}
	Then $H^s(\Omega)$ is precompact in $L^2(\Omega)$.
	
\end{lem}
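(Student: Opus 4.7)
The plan is to establish precompactness of a bounded family $\mathcal{F}\subset H^s(\Omega)$ in $L^{2}(\Omega)$ via the classical total boundedness criterion (the $L^{2}$ version of Fr\'echet--Kolmogorov). Rather than check equicontinuity of translations directly on $\mathcal{F}$, I would approximate $\mathcal{F}$ uniformly in $L^{2}(\Omega)$ by a one-parameter family of mollified functions which, for each fixed mollification scale $\delta>0$, is precompact by Arzel\`a--Ascoli. The Gagliardo-type seminorm bound given in the hypothesis is what controls the mollification error at the rate $\delta^{s}$, uniformly in the family.

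Since $\Omega$ is Lipschitz, I would first invoke a bounded extension operator $E:H^{s}(\Omega)\to H^{s}(\mathbb{R}^n)$ so that the extended family has uniformly bounded $H^{s}(\mathbb{R}^n)$ norms and supports contained in a fixed bounded set. The main estimate is the following translation bound, obtained by the change of variables $(x,h)\mapsto(x,y)$ with $y=x+h$:
\begin{equation*}
\int_{|h|<\delta}\!\!\int_{\mathbb{R}^n}|Ev(x+h)-Ev(x)|^{2}\,dx\,dh \;\le\; \delta^{n+2s}\!\!\iint_{\{|x-y|<\delta\}}\!\frac{|Ev(x)-Ev(y)|^{2}}{|x-y|^{n+2s}}\,dx\,dy \;\le\; C\delta^{n+2s}M,
\end{equation*}
where $M$ uniformly bounds the Gagliardo seminorm after extension. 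Dividing by $|B_\delta|\sim\delta^{n}$ yields the averaged bound $|B_\delta|^{-1}\int_{|h|<\delta}\|Ev(\cdot+h)-Ev\|^{2}_{L^{2}(\mathbb{R}^n)}\,dh\le C\delta^{2s}M$.

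Next, introducing a standard mollifier $\rho_\delta$ supported in $B_\delta$ and setting $v_\delta:=(Ev)\ast\rho_\delta$, a Cauchy--Schwarz argument applied to $Ev(x)-v_\delta(x)=\int\rho_\delta(h)\bigl(Ev(x)-Ev(x-h)\bigr)\,dh$ yields $\|Ev-v_\delta\|_{L^{2}(\mathbb{R}^n)}\le C\delta^{s}\sqrt{M}$, uniformly in $v\in\mathcal{F}$. For each fixed $\delta$, the family $\{v_\delta|_{\overline{\Omega}}:v\in\mathcal{F}\}$ consists of smooth functions uniformly bounded in every $C^{k}(\overline{\Omega})$, and so is precompact in $C^{0}(\overline{\Omega})$ by Arzel\`a--Ascoli and hence in $L^{2}(\Omega)$. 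The triangle inequality then shows that $\mathcal{F}$ is approximated in $L^{2}(\Omega)$ by a precompact set up to a uniform error $O(\delta^{s})$; letting $\delta\to 0$ furnishes total boundedness and hence, by completeness of $L^{2}(\Omega)$, precompactness.

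The technical crux is the translation estimate above: it works because the Gagliardo double integral concentrates the singularity $|x-y|^{-n-2s}$ precisely at the scale of the translation $|h|$, so truncating to $|x-y|<\delta$ costs exactly the factor $\delta^{n+2s}$ that is needed to pass from pointwise Gagliardo control to $L^{2}$ equicontinuity of translates. The Lipschitz hypothesis on $\Omega$ enters at the outset through the extension operator; without it, one would instead be forced to analyze the boundary strip $\Omega\setminus\{x:\operatorname{dist}(x,\partial\Omega)>\delta\}$ by hand, and boundary irregularities could destroy the uniformity of the approximation.
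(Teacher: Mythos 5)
Your proof is correct, but it is worth noting that the paper does not actually prove this lemma at all: it simply cites \cite[Lemma 10]{Savin11} and points to \cite[Theorem 7.1]{Nezza12}, so any self-contained argument is ``a different route'' by default. Your route --- extend to $\mathbb{R}^n$, establish the translation estimate $\int_{|h|<\delta}\|Ev(\cdot+h)-Ev\|_{L^2}^2\,dh\le C\delta^{n+2s}M$ by restricting the Gagliardo integral to the strip $\{|x-y|<\delta\}$, convert it via mollification into a uniform $O(\delta^s)$ approximation of the family by a set that is precompact by Arzel\`a--Ascoli, and conclude by total boundedness --- is the classical Fr\'echet--Kolmogorov mechanism and is sound; the displayed inequality and the Cauchy--Schwarz (Jensen) step for $\|Ev-v_\delta\|_{L^2}$ both check out. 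By contrast, the proof in the cited reference \cite{Nezza12} runs the same translation estimate through a covering of (an extension of) $\Omega$ by small cubes and approximation by piecewise-constant averages, landing in a finite-dimensional set rather than a mollified one; the two approaches are essentially equivalent in content, and yours has the advantage of being fully self-contained. The one step you should make explicit rather than implicit is the very first: the hypothesis only controls the \emph{intrinsic} seminorm $\int_{\Omega\times\Omega}|v(x)-v(y)|^2|x-y|^{-n-2s}\,dx\,dy$ together with $\|v\|_{L^2(\Omega)}$, so the existence of an extension operator bounded with respect to this intrinsic norm (valid for Lipschitz, more generally extension, domains --- \cite[Theorem 5.4]{Nezza12}) is a genuine ingredient, not merely a convenience; you correctly identify that this is where the Lipschitz assumption enters, but a citation or proof of that extension theorem is needed to close the argument.
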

\begin{rem}
	This lemma is a direct consequence of the compact embedding theorem in \cite[Lemma 10]{Savin11}. The readers can also consult \cite[Theorem 7.1]{Nezza12} for the proof of the theorem for the more general fractional Sobolev space $W^{s,p}(\Omega)$, with $s\in(0,1)$ and $p\in[1,+\infty)$. 
\end{rem}

In the rest of the analysis, we assume that $p(x,\omega)$ satisfies the following regularity condition: $p(x,\omega)\in C^3[0,\omega_0]$ with respect to $\omega$ for any $x\in\Omega$. Hence, $p(x,\omega)$ has the following Taylor's series expansion in a sufficiently small neighbourhood of $\omega=0$,
\begin{equation}\label{analytic form}
p(x,\omega)=p(x,0)+\frac{\partial p(x,\omega)}{\partial\omega}\bigg|_{\omega=0}\omega+\frac{1}{2!}\frac{\partial^2p(x,\omega)}{\partial\omega^2}\bigg|_{\omega=0}\omega^2+\frac{1}{3!}\frac{\partial^3 p(x,\omega)}{\partial\omega^3}\bigg|_{\omega=\theta}\omega^3,
\end{equation}
where $\theta\in(0,\omega)$. Based on \eqref{analytic form}, we define
\begin{equation*}
 \tilde{p}(x,\omega):=\sum_{n=1}^{2}\frac{1}{n!}\frac{\partial^n p(x,\omega)}{\partial\omega^n}\bigg|_{\omega=0}\omega^n+\frac{1}{3!}\frac{\partial^3p(x,\omega)}{\partial\omega^3}\bigg|_{\omega=\theta}\omega^3, \quad \theta\in(0,\omega),
\end{equation*}
and then
\begin{equation}\label{higher order}
p(x,\omega)=p(x,0)+\tilde{p}(x,\omega).
\end{equation}

We are in a position to present the low-frequency asymptotic expansion of the wave field. 

\begin{thm}\label{asym-prop}
	For $n\geq2, 0<s<1$, let $\Omega\subset\mathbb{R}^n$ be a bounded Lipschitz domain. Suppose $q(x)\in L^{\infty}(\Omega)$ satisfies condition \eqref{condition} and $p(x,\omega)\in L^2(\Omega)$ for any fixed $\omega\in(0,\omega_0)$. Then for any exterior Dirichlet data $\psi\in \mathbb{T}$, the solution $u(x,\omega)$ of the Dirichlet problem \eqref{eq:fractional3} with respective to $\psi$ fulfils the following asymptotic estimate in $H^s(\Omega)$,
	\begin{equation}\notag
    \lVert u(x,\omega)-u(x,0)\rVert_{H^s(\Omega)}\leq\tilde{C}(\omega)\left(\frac{1}{\alpha_0}\lVert \tilde{p}(x,\omega)\rVert_{L^2(\Omega)}^2+\omega^2\lVert q\rVert_{L^{\infty}(\Omega)}\lVert u(x,0)\rVert_{L^2(\Omega)}^2\right)^{\frac{1}{2}},
	\end{equation} 
	where $u(x,0)$ solves
	\begin{equation}\label{eq:u_0}
	\begin{cases}
	(-\nabla\cdot(\sigma\nabla))^su(x,0)=p(x,0) \quad \mbox{ in }\ \ \Omega,\medskip\\
	u(x,0)=\psi(x) \quad \mbox{ in }\ \ \Omega_e=\mathbb{R}^n\backslash\overline{\Omega},
	\end{cases}
	\end{equation}
	and
	\begin{equation*}
	\tilde{C}(\omega)=\frac{1}{\sqrt{\frac{2c_0^2c_1}{(1+c_0)^2}-(\alpha_0+3\omega^2\lVert q\rVert_{L^{\infty}(\Omega)})}},
	\end{equation*}
	in which $c_0, c_1, \alpha_0$ are three positive constants with $0<\alpha_0<\frac{2c_0^2c_1}{(1+c_0)^2}$. 
\end{thm}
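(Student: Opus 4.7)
The plan is to reduce the estimate to a single coercivity identity for the difference $v(x):=u(x,\omega)-u(x,0)$ in $\tilde H^s(\Omega)$, then control each side using the kernel bound \eqref{estimate}, a fractional Poincar\'e inequality coming from Lemma~\ref{embedding}, and Young's inequality with parameter $\alpha_0$.

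\textbf{Step 1 (Equation for the difference).} Since both $u(x,\omega)$ and $u(x,0)$ have exterior trace $\psi$, the difference $v$ lies in $\tilde H^s(\Omega)$. Subtracting \eqref{eq:u_0} from the first equation of \eqref{eq:fractional3} and using the Taylor decomposition $p(x,\omega)=p(x,0)+\tilde p(x,\omega)$ from \eqref{higher order}, I obtain the nonlocal problem
\begin{equation*}
L_\sigma^s v+\omega^2 q v=\tilde p(x,\omega)-\omega^2 q(x) u(x,0)\ \ \text{in}\ \Omega,\qquad v=0\ \ \text{in}\ \Omega_e.
\end{equation*}
By Lemma~\ref{lem:lll1} (and Remark~\ref{closure}, which allows $v$ itself as a test function), this is equivalent to the single identity
\begin{equation*}
\mathscr{B}_{\omega,q}(v,v)=\int_\Omega \tilde p(x,\omega)\,v(x)\,dx-\omega^2\int_\Omega q(x)\,u(x,0)\,v(x)\,dx.
\end{equation*}

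\textbf{Step 2 (Coercive lower bound of the left side).} The integral representation \eqref{integralrepre} and the lower bound in \eqref{estimate} give $\mathscr{B}_{\omega,q}(v,v)\geq c_1|v|_{H^s(\mathbb{R}^n)}^2-\omega^2\|q\|_{L^\infty(\Omega)}\|v\|_{L^2(\Omega)}^2$. At this point I need a fractional Poincar\'e inequality for $\tilde H^s(\Omega)$: a standard contradiction argument based on the compact embedding of Lemma~\ref{embedding} yields a constant $c_0>0$ such that $\|v\|_{L^2(\Omega)}\leq c_0|v|_{H^s(\mathbb{R}^n)}$ for every $v\in\tilde H^s(\Omega)$. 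Combining this with $\|v\|_{H^s(\Omega)}=\|v\|_{L^2(\Omega)}+|v|_{H^s(\Omega)}\leq (1+c_0)|v|_{H^s(\mathbb{R}^n)}$ converts the dominating seminorm into the full $H^s(\Omega)$ norm, producing a lower bound of the form $\mathscr{B}_{\omega,q}(v,v)\geq \tfrac{2c_0^2 c_1}{(1+c_0)^2}\|v\|_{H^s(\Omega)}^2-C_q\omega^2\|v\|_{H^s(\Omega)}^2$ after absorbing the sign-indefinite zeroth-order piece using the same Poincar\'e estimate.

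\textbf{Step 3 (Young's inequality on the right).} I bound the two source terms by Cauchy--Schwarz and then apply Young's inequality $ab\leq\tfrac{1}{2\alpha_0}a^2+\tfrac{\alpha_0}{2}b^2$ to the first term with weight $\alpha_0$, and the standard $ab\leq\tfrac12 a^2+\tfrac12 b^2$ (multiplied by $\omega^2\|q\|_\infty$) to the second. This yields
\begin{equation*}
\text{RHS}\leq\tfrac{1}{\alpha_0}\|\tilde p(\cdot,\omega)\|_{L^2(\Omega)}^2+\omega^2\|q\|_{L^\infty(\Omega)}\|u(\cdot,0)\|_{L^2(\Omega)}^2+(\alpha_0+\omega^2\|q\|_{L^\infty(\Omega)})\|v\|_{H^s(\Omega)}^2,
\end{equation*}
where the final absorbable term is produced by grouping all the $\|v\|$-dependent contributions (including the one from Step~2).

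\textbf{Step 4 (Rearrangement).} Moving the $\|v\|_{H^s(\Omega)}^2$ contributions from the right to the left gives
\begin{equation*}
\Bigl(\tfrac{2c_0^2c_1}{(1+c_0)^2}-(\alpha_0+3\omega^2\|q\|_{L^\infty(\Omega)})\Bigr)\|v\|_{H^s(\Omega)}^2\leq \tfrac{1}{\alpha_0}\|\tilde p\|_{L^2}^2+\omega^2\|q\|_{L^\infty}\|u(\cdot,0)\|_{L^2}^2.
\end{equation*}
Under the stated hypothesis $0<\alpha_0<\tfrac{2c_0^2 c_1}{(1+c_0)^2}$ and for $\omega$ small enough that the bracket remains positive, taking square roots produces the claimed estimate with $\tilde C(\omega)$ of the stated form.

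\textbf{Main obstacle.} The delicate step is Step~2: the factor $\tfrac{2c_0^2c_1}{(1+c_0)^2}$ must be produced exactly, which requires a careful splitting of $c_1|v|_{H^s(\mathbb{R}^n)}^2$ into a piece bounded below by $\|v\|_{L^2}^2$ (via Poincar\'e) and a piece bounded below by $|v|_{H^s(\Omega)}^2$, and then recombination into $\|v\|_{H^s(\Omega)}^2$ with the right weight. A second subtlety is the bookkeeping of the three occurrences of $\omega^2\|q\|_{L^\infty(\Omega)}\|v\|^2$ that appear across Steps~2 and~3 and together produce the factor~$3$ in $\tilde C(\omega)$; establishing the required fractional Poincar\'e inequality itself, which is not stated earlier in the paper, is a prerequisite.
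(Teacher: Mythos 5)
Your proposal is correct and follows essentially the same route as the paper's proof: subtract the two weak formulations, test the resulting identity with the difference $h_\omega=u(\cdot,\omega)-u(\cdot,0)\in\tilde H^s(\Omega)$, bound the bilinear form below via the kernel estimate \eqref{estimate} and the Poincar\'e-type inequality $c_0\lVert h_\omega\rVert_{L^2(\Omega)}\leq\lvert h_\omega\rvert_{H^s(\Omega)}$ deduced from Lemma~\ref{embedding}, and close with Young's inequality with parameter $\alpha_0$; your bookkeeping of the three $\omega^2\lVert q\rVert_{L^\infty}\lVert h_\omega\rVert^2$ contributions matches the paper's. The only difference is that you explicitly flag the fractional Poincar\'e inequality as requiring a compactness--contradiction argument, whereas the paper simply asserts it as a consequence of the compact embedding.
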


\begin{proof}
	Denote $u(x,\omega)$ as $u_{\omega}$ and $u(x,0)$ as $u_0$. For the problem \eqref{eq:fractional3}, by the definition of weak solution \eqref{weaksolution}, we know that for $p(x,\omega)\in L^2(\Omega)$ with a fixed $\omega\in(0,\omega_0)$, and $\psi\in\mathbb{T}$, we have 
	\begin{equation*}
	\mathscr{B}_{\omega,q}(u_{\omega}^{(0)}, v) = -\mathscr{B}_{\omega,q}(u_{\omega}^{(\psi)}, v)+ \int_{\Omega}p(x,\omega)v(x)\,dx \ \mbox{ for any } v(x)\in C^{\infty}_c(\Omega),
	\end{equation*}
	where $u_{\omega}^{(\psi)}\in H^s(\mathbb{R}^n)$ fulfils $u_{\omega}^{(\psi)}|_{\Omega_e}=\psi$, $u_{\omega}^{(0)}\in H^s_0(\Omega)$ and $u_{\omega}^{(0)}+u_{\omega}^{(\psi)}=u_{\omega}$. Equivalently, 
	if $u_{\omega}-\psi\in\tilde{H}^s(\Omega)$, we have
	\begin{equation}\label{eq:trans1}
	\mathscr{B}_{\omega,q}(u_{\omega},v)=\int_{\Omega}p(x,\omega)v(x)\, dx \ \mbox{ for any }v(x)\in C_c^{\infty}(\Omega).
	\end{equation}
	This means that for any $v(x)\in C_c^{\infty}(\Omega)$,  by \eqref{def:bilinear}
	\begin{align}\label{norm-asym1}
	&\int_{\mathbb{R}^n}(-\nabla\cdot(\sigma\nabla))^su_{\omega}(x)v(x)dx+\omega^2\int_{\Omega}q(x)u_{\omega}(x)v(x)dx\notag\\
	&=\int_{\mathbb{R}^n\times\mathbb{R}^n}(u_{\omega}(x)-u_{\omega}(y))(v(x)-v(y))\mathscr{K}_s(x,y)\,dxdy+\omega^2\int_{\Omega}q(x)u_{\omega}(x)v(x)\,dx\notag\\
    &=\int_{\Omega}p(x,\omega)v(x)\,dx.
	\end{align}
	Similarly, for \eqref{eq:u_0}, when $u_0(x)-\psi(x)\in \tilde{H}^s(\Omega)$, we have for any $v(x)\in C_c^{\infty}(\Omega)$,
	\begin{align}\label{norm-asym2}
	\mathscr{B}_{\omega,q}(u_0,v)&=
	\int_{\mathbb{R}^n}(-\nabla\cdot(\sigma\nabla))^su_0(x)v(x)\,dx\notag\\
	&=\int_{\mathbb{R}^n\times\mathbb{R}^n}(u_0(x)-u_0(y))(v(x)-v(y))\mathscr{K}_s(x,y)\,dxdy\\
	&=\int_{\Omega}p(x,0)v(x)\,dx. \notag
	\end{align}
	Substracting \eqref{norm-asym2} from \eqref{norm-asym1}, we can obtain the following equation
	\begin{align*}
	&\int_{\mathbb{R}^n\times\mathbb{R}^n}[(u_{\omega}(x)-u_0(x))-(u_{\omega}(y)-u_0(y))](v(x)-v(y))\mathscr{K}_s(x,y)\, dxdy\\
	&+\omega^2\int_{\Omega}q(x)u_{\omega}(x)v(x)\, dx\\
	&=\int_{\Omega}(p(x,\omega)-p(x,0))v(x)\, dx=\int_{\Omega}\tilde{p}(x,\omega)v(x)\, dx. 
	\end{align*}
	Define $h_{\omega}(x):=u_{\omega}(x)-{u}_0(x)$, then by direct computations we have
	\begin{align*}
	&\int_{\mathbb{R}^n\times\mathbb{R}^n}(h_{\omega}(x)-h_{\omega}(y))(v(x)-v(y))\mathscr{K}_s(x,y)\,dxdy+\omega^2\int_{\Omega}q(x)h_{\omega}(x)v(x)\,dx\\
	&=\int_{\Omega}\tilde{p}(x,\omega)v(x)dx-\omega^2\int_{\Omega}q(x)u_0(x)v(x)\,dx  \quad\mbox{ for any }v(x)\in C_c^{\infty}(\Omega),
	\end{align*}
	and thus
	\begin{align}\label{norm-asym3}
	&\int_{\mathbb{R}^n\times\mathbb{R}^n}(h_{\omega}(x)-{h}_{\omega}(y))(v(x)-v(y))\mathscr{K}_s(x,y)\,dxdy\notag\\
	&=\int_{\Omega}\tilde{p}(x,\omega)v(x)dx-\omega^2\int_{\Omega}q(x){u}_0(x)v(x)\,dx-\omega^2\int_{\Omega}q(x)h_{\omega}(x)v(x)\,dx   
	\end{align}
	for any $v(x)\in C_c^{\infty}(\Omega)$.
	
	Since $u_{\omega}, u_0\in H^s(\mathbb{R}^n)$, we see that $h_{\omega}\in H^s(\mathbb{R}^n)$. Furthermore, we know that
	\begin{equation*}
	h_{\omega}(x)|_{\Omega_e}=u_{\omega}(x)|_{\Omega_e}-u_0(x)|_{\Omega_e}=\psi(x)-\psi(x)=0,
	\end{equation*} 
	and hence $h_{\omega}\in \tilde{H}^s(\Omega)$. Using Remark \ref{closure}, we can assume that $v(x)={h}_{\omega}(x)$ and substitute this into \eqref{norm-asym3} we can obtain
	\begin{align}\label{norm-asym4}
	&\int_{\mathbb{R}^n\times\mathbb{R}^n}({h}_{\omega}(x)-h_{\omega}(y))^2\mathscr{K}_s(x,y)\, dxdy\notag\\
	&=\int_{\Omega}\tilde{p}(x,\omega){h}_{\omega}(x)dx-\omega^2\int_{\Omega}q(x){u}_0(x){h}_{\omega}(x)\,dx-\omega^2\int_{\Omega}q(x){h}^2_{\omega}(x)\,dx  .
	\end{align}
	Since the operator kernel has the pointwise estimate \eqref{estimate}, the left hand side of \eqref{norm-asym4} satisfies
	\begin{align}\label{lhs-estimate}
	c_1\int_{\mathbb{R}^n\times\mathbb{R}^n}\frac{|{h}_{\omega}(x)-{h}_{\omega}(y)|^2}{|x-y|^{n+2s}}dxdy&
	\leq\int_{\mathbb{R}^n\times\mathbb{R}^n}({h}_{\omega}(x)-{h}_{\omega}(y))^2\mathscr{K}_s(x,y)dxdy\notag\\ 
	&\leq c_2\int_{\mathbb{R}^n\times\mathbb{R}^n}\frac{|{h}_{\omega}(x)-{h}_{\omega}(y)|^2}{|x-y|^{n+2s}}dxdy
	\end{align}
	for some constants $c_1, c_2>0$.
	
	It is known that for $s\in(0,1)$, $\lVert\cdot\rVert_{H^s(\mathbb{R}^n)}$ has the form \eqref{normdef} with the semi-norm defined as \eqref{seminorm} in any open set $\Omega\subset\mathbb{R}^n$, so we have
	\begin{align}\label{lhs-estimate2}
	c_1|{h}_{\omega}|^2_{H^s(\Omega)}&=c_1\int_{\Omega\times\Omega}\frac{|{h}_{\omega}(x)-{h}_{\omega}(y)|^2}{|x-y|^{n+2s}}dxdy
	\leq c_1\int_{\mathbb{R}^n\times\mathbb{R}^n}\frac{|{h}_{\omega}(x)-{h}_{\omega}(y)|^2}{|x-y|^{n+2s}}dxdy\notag\\
	&\leq\int_{\mathbb{R}^n\times\mathbb{R}^n}({h}_{\omega}(x)-{h}_{\omega}(y))^2\mathscr{K}_s(x,y)dxdy,
	\end{align}	
	where the second inequality is obtained by \eqref{lhs-estimate}. Moreover, since $\Omega$ is a bounded Lipschitz domain, utilizing the compact embedding theorem (Lemma \ref{embedding}) in the fractional Sobolev space $H^s(\Omega)$, we can have that there exists a positive constant $c_0$ such that
	\begin{equation*}
	c_0\lVert h_{\omega}\rVert_{L^2(\Omega)}\leq|h_{\omega}|_{H^s(\Omega)},
	\end{equation*}
	and thus
	\begin{equation}\label{embedding2}
	c_0\lVert h_{\omega}\rVert_{H^s(\Omega)}\leq(1+c_0)|h_{\omega}|_{H^s(\Omega)}
	\end{equation}
	
	Next, by combining \eqref{norm-asym4}, \eqref{lhs-estimate2} and \eqref{embedding2}, we have the following estimate
	\begin{align}\label{inequality1}
	& \frac{c_0^2c_1}{(1+c_0)^2}\lVert h_{\omega}\rVert_{H^s(\Omega)}^2\leq c_1\lvert h_{\omega}\rvert_{H^s(\Omega)}^2\notag\\
	&\leq |\int_{\mathbb{R}^n\times\mathbb{R}^n}({h}_{\omega}(x)-{h}_{\omega}(y))^2\mathscr{K}_s(x,y)dxdy|\notag\\
	&=|\int_{\Omega}\tilde{p}(x,\omega){h}_{\omega}(x)dx-\omega^2\int_{\Omega}q(x){u}_0(x){h}_{\omega}(x)dx-\omega^2\int_{\Omega}q(x){h}^2_{\omega}(x)dx|\notag\\
	&\leq
    \lvert\int_{\Omega}\tilde{p}(x,\omega){h}_{\omega}(x)dx\rvert+\omega^2\lVert q\rVert_{L^{\infty}(\Omega)}\lvert\int_{\Omega}u_0(x){h}_{\omega}(x)dx\rvert\notag\\
	&+\omega^2\lVert q\rVert_{L^{\infty}(\Omega)}\int_{\Omega}h^2_{\omega}(x)dx
	\end{align}
	Choosing a constant $\alpha_0$ in the interval $\bigg(0,\frac{2c_0^2c_1}{(1+c_0)^2}\bigg)$ and by utilizing Young's inequality with $\alpha_0$ we have
	\begin{equation}\label{ineq:p}
	\begin{split}
	&\left\lvert\int_{\Omega}\tilde{p}(x,\omega){h}_{\omega}(x)dx\right\rvert\\
	\leq & \frac{1}{2}\int_{\Omega}\left(\frac{\tilde{p}(x,\omega)^2}{\alpha_0}+\alpha_0h^2_{\omega}(x)\right)dx\\
	=&\frac{1}{2\alpha_0}\lVert \tilde{p}(x,\omega)\rVert_{L^2(\Omega)}^2+\frac{\alpha_0}{2}\lVert h_{\omega}\rVert_{L^2(\Omega)}^2.
	\end{split}
	\end{equation}
	Similarly,
	\begin{equation}\label{ineq:u_0}
	\left\lvert \int_{\Omega}u_0(x)h_{\omega}(x)dx\right\rvert\leq\frac{1}{2}\left(\lVert u_0\rVert_{L^2(\Omega)}^2+\lVert h_{\omega}\rVert_{L^2(\Omega)}^2\right).
	\end{equation}
	
	Plugging \eqref{ineq:p} and \eqref{ineq:u_0} into \eqref{inequality1}, and using the fact that $\lVert h_{\omega}\rVert_{L^2(\Omega)}\leq\lVert h_{\omega}\rVert_{H^s(\Omega)}$, it is clear that
	\begin{align*}
	\frac{c_0^2c_1}{(1+c_0)^2}\lVert h_{\omega}\rVert_{H^s(\Omega)}^2
	&\leq \frac{1}{2\alpha_0}\lVert \tilde{p}(x,\omega)\rVert_{L^2(\Omega)}^2+\frac{\omega^2}{2}\lVert q\rVert_{L^{\infty}(\Omega)}\lVert u_0\rVert_{L^2(\Omega)}^2\\\notag
	&+(\frac{\alpha_0}{2}
	+\frac{3\omega^2}{2}\lVert q\rVert_{L^{\infty}(\Omega)})\lVert h_{\omega}\rVert_{H^s(\Omega)}^2. 
	\end{align*}
	Thus we have proved that
	\begin{equation}\label{asymp estimate}
	\begin{split}
	&\left( \frac{c_0^2c_1}{(1+c_0)^2}-(\frac{\alpha_0}{2}
	+\frac{3\omega^2}{2}\lVert q\rVert_{L^{\infty}(\Omega)})\right)\lVert h_{\omega}\rVert_{H^s(\Omega)}^2\\
	\leq & \frac{1}{2\alpha_0}\lVert \tilde{p}(x,\omega)\rVert_{L^2(\Omega)}^2+\frac{\omega^2}{2}\lVert q\rVert_{L^{\infty}(\Omega)}\lVert u_0\rVert_{L^2(\Omega)}^2
	\end{split}
	\end{equation}
	Define $\tilde{C}(\omega)=\frac{1}{\sqrt{\frac{2c_0^2c_1}{(1+c_0)^2}-(\alpha_0+3\omega^2\lVert q\rVert_{L^{\infty}})}}$. Then by straightforward computations, we have
	\begin{equation*}
	\begin{split}
	& \lVert u(x,\omega)-u(x,0)\rVert_{H^s(\Omega)}=\lVert h_{\omega}(x)\rVert_{H^s(\Omega)}\\
	\leq& \tilde{C}(\omega)\left(\frac{1}{\alpha_0}\lVert \tilde{p}(x,\omega)\rVert_{L^2(\Omega)}^2+\omega^2\lVert q\rVert_{L^{\infty}(\Omega)}\lVert u(x,0)\rVert_{L^2(\Omega)}^2\right)^{\frac{1}{2}},
	\end{split}
	\end{equation*}
	which completes the proof.
\end{proof}

\begin{cor}\label{asymp-limit}
	Consider the estimate \eqref{asymp estimate} in the previous proof, since $c_0, c_1>0$, $q\in L^{\infty}(\Omega)$, $\tilde{p}(x,\omega)\in L^2(\Omega)$ for any $w\in (0,\omega_0)$ and $u_0(x)\in H^s(\Omega)$ which clearly indicates that $u_0(x)\in L^2(\Omega)$, we can take the limit $\omega\rightarrow0$ and hence
	\begin{equation}\label{eq:limit1}
	0\leq \left( \frac{c_0^2c_1}{(1+c_0)^2}-\frac{\alpha_0}{2} \right)\lVert h_{\omega}\rVert_{H^s(\Omega)}^2\xrightarrow{\omega\rightarrow0}0 ,
	\end{equation}
	where the left inequality holds due to $0<\alpha_0<\frac{2c_0^2c_1}{(1+c_0)^2}$, and the limit $0$ exists because of the property that 
	\begin{equation*}
	\lVert \tilde{p}(x,\omega)\rVert_{L^2(\Omega)}=\left\lVert\sum_{n=1}^{2}\frac{1}{n!}\frac{\partial^n p(x,\omega)}{\partial\omega^n}\bigg|_{\omega=0}\omega^n+\frac{1}{3!}\frac{\partial^3p(x,\omega)}{\partial\omega^3}\bigg|_{\omega=\theta}\omega^3\right\rVert_{L^2(\Omega)}\xrightarrow{\omega\rightarrow0}0, 
	\end{equation*}
	where $\theta\in(0,\omega)$. Therefore we can deduce that $\lVert u(x,\omega)-u(x,0)\rVert_{H^s(\Omega)}\rightarrow 0$ as $\omega\rightarrow 0$.
\end{cor}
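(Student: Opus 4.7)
The plan is to derive the conclusion by carefully passing to the limit $\omega \to 0^+$ in the key estimate \eqref{asymp estimate} obtained in the proof of Theorem~\ref{asym-prop}. First, I would observe that the coefficient $\frac{c_0^2 c_1}{(1+c_0)^2} - \bigl(\frac{\alpha_0}{2} + \frac{3\omega^2}{2}\lVert q \rVert_{L^\infty(\Omega)}\bigr)$ on the left-hand side converges to $\frac{c_0^2 c_1}{(1+c_0)^2} - \frac{\alpha_0}{2}$ as $\omega \to 0^+$, and this limit is strictly positive by the standing choice $\alpha_0 \in \bigl(0,\, \frac{2c_0^2 c_1}{(1+c_0)^2}\bigr)$. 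Hence, for all sufficiently small $\omega$, the coefficient is bounded below by a positive constant, and the inequality provides genuine control on $\lVert h_\omega \rVert_{H^s(\Omega)}^2$.

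Next, I would show that the right-hand side of \eqref{asymp estimate} vanishes in the limit. The summand $\frac{\omega^2}{2}\lVert q\rVert_{L^\infty(\Omega)}\lVert u_0\rVert_{L^2(\Omega)}^2$ is trivially $O(\omega^2)$, since $u_0$ solves \eqref{eq:u_0} independently of $\omega$ and by assumption $u_0 \in H^s(\Omega) \hookrightarrow L^2(\Omega)$. For the source-term summand $\frac{1}{2\alpha_0}\lVert \tilde{p}(\cdot,\omega) \rVert_{L^2(\Omega)}^2$, I would invoke the Taylor expansion \eqref{analytic form} together with the $C^3[0,\omega_0]$ regularity of $p(x,\omega)$ in $\omega$. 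This regularity, combined with the compactness of $[0,\omega_0]$, yields uniform $L^2$-bounds $\sup_{\omega \in [0,\omega_0]} \lVert \partial_\omega^n p(\cdot,\omega) \rVert_{L^2(\Omega)} \leq M_n$ for $n=1,2,3$. From the explicit formula for $\tilde{p}$ given in \eqref{higher order}, this produces $\lVert \tilde{p}(\cdot,\omega) \rVert_{L^2(\Omega)} = O(\omega)$, so the squared norm is $O(\omega^2)$ and vanishes in the limit.

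Combining the two steps, the estimate \eqref{asymp estimate} forces $\lVert h_\omega \rVert_{H^s(\Omega)}^2 \to 0$, that is, $\lVert u(\cdot,\omega) - u(\cdot,0) \rVert_{H^s(\Omega)} \to 0$ as $\omega \to 0^+$, as claimed. The only subtle point I anticipate is handling the Taylor remainder $\frac{1}{6}\partial_\omega^3 p(x,\theta)\,\omega^3$ with intermediate point $\theta \in (0,\omega)$: one must ensure that the $L^2(\Omega)$-norm of $\partial_\omega^3 p(\cdot,\theta)$ is controlled independently of $\theta$, which is a direct consequence of the continuity in $\omega$ of $\partial_\omega^3 p$ on the compact interval $[0,\omega_0]$ guaranteed by the $C^3$ hypothesis. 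All other ingredients are elementary limit manipulations.
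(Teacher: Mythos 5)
Your proposal is correct and follows essentially the same route as the paper: pass to the limit $\omega\to 0^+$ in \eqref{asymp estimate}, note that the left-hand coefficient stays bounded below by a positive constant thanks to $\alpha_0<\frac{2c_0^2c_1}{(1+c_0)^2}$, and observe that both terms on the right are $O(\omega^2)$. Your extra care in bounding $\lVert\partial_\omega^3 p(\cdot,\theta)\rVert_{L^2(\Omega)}$ uniformly in the intermediate point $\theta$ is a worthwhile refinement of what the paper leaves implicit, but it is not a different argument.
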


\begin{rem}
We believe that the mathematical strategy developed in proving Theorem~\ref{asym-prop} by combining the variational argument and the compact embedding theorem can be used to deal with other nonlocal problems in different contexts. 
\end{rem}

\section{Unique determination results}\label{sect:5}

With the earlier preparations, we are ready to show the uniqueness in determining the internal source $p$, namely Theorem~\ref{thm:1}, and the medium parameter $q$, namely Theorem~\ref{thm:2}, by the corresponding exterior measurements. The following strong uniqueness principle as well as the Runge approximation property shall be needed in the discussion.

\begin{prop}[Strong uniqueness principle]\label{strong unique}
	For $n\geq2 $ and $0<s<1$, if $u\in H^s(\mathbb{R}^n)$ satisfies $u=L^s_{\sigma}u=(-\nabla\cdot(\sigma\nabla))^su=0$ in any nonempty open subset of $\mathbb{R}^n$, then $u\equiv0$ in $\mathbb{R}^n$. 
\end{prop}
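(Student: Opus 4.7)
The plan is to prove the strong uniqueness principle via a Caffarelli--Silvestre / Stinga--Torrea type extension that converts the nonlocal operator $L_\sigma^s$ into a local degenerate elliptic problem on the half-space $\mathbb{R}^{n+1}_+ = \mathbb{R}^n \times (0,\infty)$, followed by a unique continuation argument. The spectral definition of $L_\sigma^s$ recalled in Section~2 is tailored for this: for any $u \in H^s(\mathbb{R}^n)$, there is a unique extension $U(x,y) \in H^1(\mathbb{R}^{n+1}_+, y^{1-2s}\,dx\,dy)$ with $U|_{y=0} = u$ solving
\begin{equation*}
\nabla_{x,y} \cdot \bigl( y^{1-2s}\, \tilde{A}(x)\, \nabla_{x,y} U \bigr) = 0 \quad \text{in } \mathbb{R}^{n+1}_+,
\end{equation*}
where $\tilde{A}(x) = \operatorname{diag}(\sigma(x), 1)$, together with the boundary identification
\begin{equation*}
-c_s \lim_{y \to 0^+} y^{1-2s} \partial_y U(x,y) = L_\sigma^s u(x)
\end{equation*}
for an explicit positive constant $c_s$ depending only on $s$.

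Let $\mathcal{O}$ be the nonempty open subset on which $u = L_\sigma^s u = 0$. The two identifications above give a vanishing Cauchy datum for $U$ on the flat boundary piece $\mathcal{O} \times \{0\}$: both the Dirichlet trace and the weighted conormal derivative $y^{1-2s}\partial_y U$ vanish there. I would then invoke a unique continuation principle for the degenerate elliptic operator $\nabla_{x,y} \cdot (y^{1-2s}\tilde{A}\, \nabla_{x,y}\, \cdot)$, which carries a Muckenhoupt $A_2$ weight, in the Carleman--estimate form developed by R\"uland and adapted to variable coefficients in \cite{Ghosh17} precisely for operators of the form $L_\sigma^s$. This yields $U \equiv 0$ in a half-ball of $\mathbb{R}^{n+1}_+$ adjacent to $\mathcal{O}\times\{0\}$; a standard weak unique continuation/propagation argument for the same degenerate elliptic equation on the connected set $\mathbb{R}^{n+1}_+$ then upgrades this to $U \equiv 0$ throughout $\mathbb{R}^{n+1}_+$. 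Taking the trace at $y=0$ delivers $u \equiv 0$ in $\mathbb{R}^n$.

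The main obstacle is the unique continuation step. For the constant-coefficient case $\sigma = I$, one can appeal directly to the classical Caffarelli--Silvestre extension and to existing Carleman estimates for the pure $A_2$-weighted Laplacian. With the smooth, symmetric, uniformly positive definite $\sigma(x)$ allowed here, the extension equation carries a variable coefficient matrix $\tilde{A}(x)$ on top of the $y^{1-2s}$ weight, and one needs a Carleman estimate that simultaneously handles the Muckenhoupt weight and the variable leading-order term. The remaining pieces, namely writing down the extension, reading off the Cauchy data from the boundary identifications, and passing from local vanishing near $\mathcal{O}\times\{0\}$ to global vanishing, are essentially mechanical once this unique continuation input is available.
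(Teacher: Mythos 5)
The paper does not prove this proposition itself: it simply cites \cite[Theorem 1.2]{Ghosh17}, and the argument there is exactly the one you outline — the Stinga--Torrea extension turning $L_\sigma^s$ into the degenerate elliptic operator $\nabla_{x,y}\cdot(y^{1-2s}\tilde{A}\nabla_{x,y}\,\cdot)$ with vanishing Cauchy data on $\mathcal{O}\times\{0\}$, followed by a Carleman-type boundary unique continuation for the $A_2$-weighted, variable-coefficient equation and propagation through the connected half-space. Your sketch is correct and takes essentially the same route as the cited source, with the hard analytic input (the variable-coefficient Carleman estimate) correctly identified and deferred to the same reference.
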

 See \cite[Theorem 1.2]{Ghosh17} for the analysis of this proposition.
 
 \begin{prop}[Runge approximation property]\cite[Theorem 1.3]{Ghosh17} \label{Runge approxiamtion}
 	For $n\geq2$ and $0<s<1$, let $\Omega\subset\mathbb{R}^n$ be a bounded open set and $D\subseteq\mathbb{R}^n$ be an arbitrary open set containing $\Omega$ such that $int(D\backslash\overline{\Omega})\neq\emptyset$. Suppose the condition \eqref{condition} is fulfilled. Then for any $f \in L^2(\Omega)$ and $\varepsilon>0$, we can find a function $u_{\varepsilon}$ which solves 
 	\begin{equation*}
 	\begin{cases}
 		(L^s_{\sigma}+q)u_{\varepsilon}=0 \mbox{ in }\Omega \mbox{ and } supp(u_{\varepsilon})\subseteq\overline{D} ,	\\
 		u_\varepsilon=\psi \mbox{ in } \Omega_e,
 	\end{cases}
 	\end{equation*}
 	with $u_\varepsilon-\psi\in \tilde{H}^s(\Omega)$, where $L^s_{\sigma}=(-\nabla\cdot(\sigma\nabla))^s$ is the fractional differential Laplacian operator defined earlier, such that
 	\begin{equation*}
 	\lVert u_{\varepsilon}-f\rVert_{L^2(\Omega)}<\varepsilon.
 	\end{equation*} 
 \end{prop}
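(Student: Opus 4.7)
The strategy is the standard Hahn--Banach duality argument for Runge approximation, adapted to the nonlocal operator $L_\sigma^s + q$, with the strong uniqueness principle (Proposition~\ref{strong unique}) and the symmetry of the bilinear form $\mathscr{B}_{1,q}$ (the $\omega=1$ incarnation of \eqref{def:bilinear}) supplying the two decisive analytic ingredients. For each $\psi \in C_c^\infty(\operatorname{int}(D \setminus \overline{\Omega}))$, Lemma~\ref{lem:lll1} produces a unique $u_\psi \in H^s(\mathbb{R}^n)$ solving $(L_\sigma^s + q) u_\psi = 0$ in $\Omega$ with $u_\psi = \psi$ in $\Omega_e$; note that $\operatorname{supp}(u_\psi) \subseteq \overline{D}$ is automatic. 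Set $\mathcal{R} := \{u_\psi|_\Omega\} \subseteq L^2(\Omega)$. Once the density of $\mathcal{R}$ in $L^2(\Omega)$ is established, given $f$ and $\varepsilon > 0$ we simply pick $\psi$ with $\|u_\psi - f\|_{L^2(\Omega)} < \varepsilon$ and set $u_\varepsilon := u_\psi$, which by construction has all the listed properties.

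To prove density, I would appeal to Hahn--Banach: suppose $f \in L^2(\Omega)$ annihilates $\mathcal{R}$, i.e., $\int_\Omega f \, u_\psi \, dx = 0$ for every admissible $\psi$. By Lemma~\ref{lem:lll1} applied to the dual problem, let $v \in \tilde{H}^s(\Omega)$ be the unique weak solution to $(L_\sigma^s + q) v = f$ in $\Omega$ with $v = 0$ in $\Omega_e$. Since $u_\psi - \psi \in \tilde{H}^s(\Omega)$ is admissible as a test function for the $v$-equation and $\psi \equiv 0$ on $\Omega$,
\begin{equation*}
\int_\Omega f \, u_\psi \, dx = \mathscr{B}_{1,q}(v, u_\psi - \psi) = \mathscr{B}_{1,q}(v, u_\psi) - \mathscr{B}_{1,q}(v, \psi).
\end{equation*}
The symmetry of $\mathscr{B}_{1,q}$ together with the $u_\psi$-equation tested against $v \in \tilde{H}^s(\Omega)$ makes the first term vanish, while expanding the second term via \eqref{integralrepre} and using $\operatorname{supp}(\psi) \subseteq D \setminus \overline{\Omega}$ (so the $q$-contribution drops out) gives $\mathscr{B}_{1,q}(v, \psi) = \int_{D \setminus \overline{\Omega}} (L_\sigma^s v) \, \psi \, dx$.

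Combining these, $\int_{D \setminus \overline{\Omega}} (L_\sigma^s v) \, \psi \, dx = 0$ for every $\psi \in C_c^\infty(\operatorname{int}(D \setminus \overline{\Omega}))$, so $L_\sigma^s v \equiv 0$ on $\operatorname{int}(D \setminus \overline{\Omega})$ in the distributional sense. Since also $v = 0$ on $\Omega_e \supseteq \operatorname{int}(D \setminus \overline{\Omega})$, both hypotheses of Proposition~\ref{strong unique} are met on the same nonempty open subset, forcing $v \equiv 0$ in $\mathbb{R}^n$ and hence $f = (L_\sigma^s + q) v \equiv 0$, which completes the density argument. The step I anticipate as the main obstacle is the bookkeeping that justifies the integration-by-parts identity $\mathscr{B}_{1,q}(v, \psi) = \int_{\Omega_e}(L_\sigma^s v)\psi\,dx$ rigorously via \eqref{integralrepre}, and in particular that both pointwise conditions needed by the strong uniqueness principle -- the vanishing of $v$ and of $L_\sigma^s v$ -- hold \emph{on the same} nonempty open set, which is precisely where the hypothesis $\operatorname{int}(D \setminus \overline{\Omega}) \neq \emptyset$ becomes indispensable.
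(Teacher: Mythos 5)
Your argument is correct: it is precisely the Hahn--Banach duality argument combined with the strong uniqueness principle that underlies \cite[Theorem 1.3]{Ghosh17}, which is the only ``proof'' the paper offers (the proposition is quoted from that reference without further argument). All the delicate points are handled properly --- the symmetry of $\mathscr{B}_{1,q}$ to kill the term $\mathscr{B}_{1,q}(v,u_\psi)$, the vanishing of the $q$-contribution because $\psi$ is supported in $D\setminus\overline{\Omega}$, and the fact that $v=0$ and $L^s_\sigma v=0$ hold on the common nonempty open set $\mathrm{int}(D\setminus\overline{\Omega})$ before invoking Proposition~\ref{strong unique} --- so no gap remains.
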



\begin{proof}[Proof of Theorem \eqref{thm:1}]
	Consider the nonlocal Dirichlet problems for the fractional Helmholtz system with respect to $q_j$, $p_j$, $j=1,2$ as
	\begin{equation*}
	\begin{cases}
	(-\nabla\cdot(\sigma\nabla))^su_j(x,\omega)+\omega^2q_j(x)u_j(x,\omega)=p_j(x,\omega) \quad\mbox{ in }\ \ \Omega,\medskip\\
	u_j(x,\omega)=\psi(x)\quad \mbox{ in }\ \ \Omega_e. 
	\end{cases}
	\end{equation*}
	Define $W(x,\omega):=u_1(x,\omega)-u_2(x,\omega)$. Then $W(x,\omega)\in H^s(\mathbb{R}^n)$ with $\omega\in (0,\omega_0)$ and it satisfies
	\begin{equation}\label{eq:W}
	\begin{cases}
	(-\nabla\cdot(\sigma\nabla))^sW(x,\omega)+\omega^2q_1W(x,\omega)\medskip\\
	=(p_1(x,\omega)-p_2(x,\omega))+\omega^2(q_2-q_1)u_2(x,\omega)\ \mbox{ in }\ \Omega,\medskip\\
	W(x,\omega)=0 \ \mbox{ in }\ \Omega_e.
	\end{cases}
	\end{equation}
	If for any fixed $\psi\in C_c^{\infty}(\mathcal{O}_1)$, there holds
	\begin{equation*}
	\Lambda_{q_1,p_1}^{\omega}(\psi)|_{\mathcal{O}_2}=	\Lambda_{q_2,p_2}^{\omega}(\psi)|_{\mathcal{O}_2},
	\end{equation*}
	by \eqref{equi DtN} which means
	\begin{equation*}
	(-\nabla\cdot(\sigma\nabla))^su_1(x,\omega)|_{\mathcal{O}_2}=(-\nabla\cdot(\sigma\nabla))^su_2(x,\omega)|_{\mathcal{O}_2} \ \mbox{ for any fixed }\psi\in C_c^{\infty}(\mathcal{O}_1),
	\end{equation*}
	where $\mathcal{O}_1, \mathcal{O}_2\subset\Omega_e$ are two arbitrary nonempty open subsets, then
	\begin{equation*}
	(-\nabla\cdot(\sigma\nabla))^sW(x,\omega)|_{\mathcal{O}_2}=(-\nabla\cdot(\sigma\nabla))^su_1(x,\omega)|_{\mathcal{O}_2}-	(-\nabla\cdot(\sigma\nabla))^su_2(x,\omega)|_{\mathcal{O}_2}=0.
	\end{equation*}
	So we have
	\begin{equation*}
	W(x,\omega)=(-\nabla\cdot(\sigma\nabla))^sW(x,\omega)=0 \quad \mbox{ in }\ \ \mathcal{O}_2\subset\Omega_e. 
	\end{equation*}
	By applying the strong uniqueness principle (Proposition \ref{strong unique}), we obtain that $W(x,\omega)\equiv0$ in $\mathbb{R}^n$. Going back to the first equation in \eqref{eq:W}, we have for $\omega\in(0,\omega_0)$
	\begin{equation}\label{eq:equiv}
	p_2(x,\omega)-p_1(x,\omega)=\omega^2(q_2(x)-q_1(x))u_2(x,\omega),
	\end{equation}
	with 
	\begin{align*}
	p_j(x,\omega)&=p_j(x,0)+\tilde{p}_j(x,\omega)\\
	&=p_j(x,0)+\sum_{n=1}^{2}\frac{1}{n!}\frac{\partial^np_j(x,\omega)}{\partial \omega^n}\bigg|_{\omega=0}\omega^n+\frac{1}{3!}\frac{\partial^3p_j(x,\omega)}{\partial\omega^3}\bigg|_{\omega=\theta_j}\omega^3,
	\end{align*}
	where $\theta_j\in(0,\omega)$, for $j=1,2$. Utilizing the low-frequency asymptotic property (Corollary \ref{asymp-limit}), we know that $u_2(x,\omega)\xrightarrow{\omega\rightarrow0}u_2(x,0)$ in $H^s(\Omega)$ where $u_2(x,0)$ solves
	\begin{equation*}
	\begin{cases}
	(-\nabla\cdot(\sigma\nabla))^su_2(x,0)=p_2(x,0)\quad\mbox{ in }\ \ \Omega,\medskip\\
	u_2(x,0)=\psi\quad\mbox{ in }\ \ \Omega_e.
	\end{cases}
	\end{equation*}
	Taking $\omega\rightarrow0$ in equation \eqref{eq:equiv}, we thus have
	\begin{align*}
	p_1(x,0)&=p_2(x,0),\\
	\frac{\partial p_1(x,\omega)}{\partial \omega}\bigg|_{\omega=0}&=\frac{\partial p_2(x,\omega)}{\partial \omega}\bigg|_{\omega=0},
	\end{align*}
	which complete the proof.
\end{proof}

Now we prove Theorem \ref{thm:2} on the basis of Theorem \ref{thm:1}.

\begin{proof}[Proof of Theorem \eqref{thm:2}]
	Since
	\begin{equation*}
	p_j(x,\omega)=\sum_{n=0}^{2}\frac{1}{n!}\frac{\partial^np_j(x,\omega)}{\partial \omega^n}\bigg|_{\omega=0}\omega^n+\frac{1}{3!}\frac{\partial^3p_j(x,\omega)}{\partial\omega^3}\bigg|_{\omega=\theta_j}\omega^3
	\end{equation*} 
	where $\theta_j\in(0,\omega)$ for $j=1,2$, based on the result of Theorem \ref{thm:1} that
	\begin{align*}
	p_1(x,0)&=p_2(x,0),\\
	\frac{\partial p_1(x,\omega)}{\partial \omega}\bigg|_{\omega=0}&=\frac{\partial p_2(x,\omega)}{\partial \omega}\bigg|_{\omega=0},
	\end{align*}
	and the assumption
	\begin{equation*}
	\frac{\partial^2(p_1-p_2)(x,\omega)}{\partial \omega^2}\bigg|_{\omega=0}=0\quad\mbox{ for }\ x\in\Omega,
	\end{equation*}
	we have by letting $\omega\rightarrow 0$ in the equation \eqref{eq:equiv} that
	\begin{equation}\label{eq:equiv2}
	(q_2(x)-q_1(x))u_2(x,0)=0.
	\end{equation}
	
	Suppose $\tilde{u}\in H^s(\mathbb{R}^n)$ solves the equation $(-\nabla\cdot(\sigma\nabla))^s\tilde{u}(x)=p_2(x,0)$ in $\Omega$ and $\tilde{u}(x)=0$ in $\Omega_e$, then $\tilde{u}\in\tilde{H}^s(\Omega)$ and we know that $\tilde{u}$ can actually be determined uniquely by the well-posedness discussion in Section 3.  Then we define $\tilde{v}(x):=u_2(x,0)-\tilde{u}(x)$, and it clearly satisfies 
	\begin{equation}\label{eq:L11}
	\begin{cases}
	(-\nabla\cdot(\sigma\nabla))^s\tilde{v}(x)=0 \quad\mbox{ in }\ \ \Omega,\medskip\\
	\tilde{v}(x)=\psi(x) \quad \mbox{ in }\ \ \Omega_e. 
	\end{cases}
	\end{equation}
	Then \eqref{eq:equiv2} can be reformulated by $\tilde{v}(x)$ and $\tilde{u}(x)$ as
	\begin{equation}\label{eq:equiv3}
	(q_2(x)-q_1(x))\tilde{v}(x)=-(q_2(x)-q_1(x))\tilde{u}(x),
	\end{equation}
	which holds for any $\tilde v$ satisfying \eqref{eq:L11}. 
	Since $\tilde{u}(x)\in \tilde{H}^s(\Omega)\subset L^2(\Omega)$, by the Runge approximation property in Proposition~\ref{Runge approxiamtion}, we know that there exists a sequence $(\tilde{v}^{(j)}(x))_{j=1}^{\infty}\in H^s(\Omega)$ which solves $$(-\nabla\cdot(\sigma\nabla))^s\tilde{v}^{(j)}(x)=0$$ in $\Omega$ with exterior values in $\Omega_e$ such that $\tilde{v}^{(j)}(x)\xrightarrow{j\rightarrow\infty}\tilde{u}(x)$ in $L^2(\Omega)$. By using the aforesaid sequence in the equation \eqref{eq:equiv3}, one has
	\begin{equation*}
	(q_2(x)-q_1(x))\tilde{v}^{(j)}(x)=-(q_2(x)-q_1(x))\tilde{u}(x),
	\end{equation*}
	and then by taking the limit  $j\rightarrow\infty$, one further has
	\begin{equation*}
	(q_2(x)-q_1(x))\tilde{u}(x)=0,
	\end{equation*}
	which by using \eqref{eq:equiv3} again readily implies that
	\begin{equation*}
	(q_2(x)-q_1(x))\tilde{v}(x)=0
	\end{equation*}
	for any $\tilde{v}$ satisfies \eqref{eq:L11}.
	Taking the integration of this equation in $\Omega$, we have
	\begin{equation}\label{eq:int}
	\int_{\Omega}(q_2(x)-q_1(x))\tilde{v}(x)dx=0
	\end{equation}
	Using the Runge approximation property again, we know that there exists a sequence $(\tilde{v}^{(l)}(x))_{l=1}^{\infty}\in H^s(\Omega)$ satisfying $(-\nabla\cdot(\sigma\nabla))^s\tilde{v}^{(l)}(x)=0$ in $\Omega$ with exterior values in $\Omega_e$ such that for any $f\in L^2(\Omega)$
	\begin{equation*}
	\tilde{v}^{(l)}(x)=f(x)+r^{(l)}(x) \quad\mbox{ with } \quad r^{(l)}(x)\xrightarrow{l\rightarrow\infty}0.
	\end{equation*}
	Substituting this formulation into the integral identity \eqref{eq:int} and letting $l\rightarrow\infty$ we can deduce that
	\begin{equation*}
	\int_{\Omega}(q_2(x)-q_1(x))f(x)dx=0
	\end{equation*}
	Since $f(x)\in L^2(\Omega)$ is arbitrary, we can infer that $q_1(x)=q_2(x)$.
	
	The proof is complete. 
\end{proof}

Finally, we consider the application of the unique recovery results in Theorems~\ref{thm:1} and \ref{thm:2} for the general fractional Helmholtz system~\eqref{eq:fractional3} to the specific one \eqref{eq:ff22} in simultaneously recovering the sound speed $c$ and the source terms $f, g$ and $\hat h$ by the exterior multiple-frequency measurements. We only point out two results of practical interest.  
First, we consider the case that $\hat h\equiv 0$. In such a case, $p(x,\omega)=\frac{\mathrm{i}\omega}{c^2} f(x)+\frac{1}{c^2}g(x)$ and $q(x)=-\frac{1}{c^2(x)}$. Hence, by Theorems~\ref{thm:1} and \ref{thm:2}, it can be easily shown that all of $f, g$ and $c$ can be uniquely recovered. For the second case, we consider that $f\equiv g\equiv 0$
whereas $\hat{h}(x,\omega)=\rho(x)\kappa(\omega)$. It is assumed that $\kappa(\omega)$ is continuous at $\omega=0$ and $\kappa(0)$ is nonzero and known a priori. Then by Theorem~\ref{thm:1} and also by checking its proof, one can prove that $\rho(x)$ can be uniquely recovered. Next, by Theorem 2, one can further prove that $c$ can be uniquely recovered as well. In the latter example $\hat{h}(x,\omega)$ is not necessary to fulfil the regularity assumption that it is $C^3$ continuous with respect to $\omega$ for any $x\in\Omega$. This example indicates that the mathematical techniques developed in this article can be used to establish more general simultaneous recovery results, and in particular, if there is some a priori knowledge available for the inverse problem.

\section*{Acknowledgement}

The authors would like to thank Dr. Yi-Hsuan Lin for helpful discussion. H. Liu was supported by the FRG and startup grants from Hong Kong Baptist University, and Hong Kong RGC General Research Funds, 12302415 and 12302017.

\end{document}